\theoremstyle{plain}
\newtheorem{theorem}{Theorem}[section]
\newtheorem{lemma}[theorem]{Lemma}
\theoremstyle{definition}
\newtheorem{remark}[theorem]{Remark}
\numberwithin{equation}{section}
\begin{document}

\title[An answer to a question of A. Lubin]{An answer to a question of A.
Lubin: The lifting problem for commuting subnormals}
\author{Sang Hoon Lee}
\address{Department of Mathematics, Chungnam National University, Daejeon,
305-764, Korea}
\email{slee@cnu.ac.kr}

\urladdr{}
\author{Woo Young Lee}
\address{Department of Mathematics, Seoul National University, Seoul,
151-747, Korea}
\email{wylee@snu.ac.kr}

\urladdr{}
\author{Jasang Yoon}
\address{School of Mathematical and Statistical Sciences,
The University of Rio Grande Valley, Edinburg, Texas 78539}
\email{jasang.yoon@utrgv.edu}

\thanks{\textsl{MSC(2010)}: Primary 47B20, 47B37, 47A13, 47A20; Secondary
28A50, 05A19, 44A60\\
The first named author was partially supported by Basic Science Research
Program through the National Research Foundation of Korea(NRF) funded by the
Ministry Education, Science and Technology (2013R1A1A2008640). \\
The second named author was partially supported by Basic Science Research
Program through the National Research Foundation of Korea(NRF) funded by the
Ministry Education, Science and Technology (2009-0085279). \\
The third named author was partially supported by a Faculty Research Council
Grant at The University of Texas-Pan American.}

\keywords{The lifting problem for commuting subnormal operators,
subnormal pairs, jointly subnormal, $2$-variable weighted shifts,
Berger's Theorem, Agler's criterion, Lambert's Theorem,
disintegration of measures, Chu-Vandermonde identity}

\begin{abstract}
In this paper we give an answer to a long-standing open question on the lifting
problem for commuting subnormals (due to A. Lubin): The subnormality for the
sum of commuting
subnormal operators does not guarantee the existence of commuting normal
extensions. \vskip .7cm
\end{abstract}

\maketitle

%%%%%%%%%%%%%%%%%%%%%%%%%%%%%%%%%%%%%%%%%%%%%%%%%%%%%%%%%%%%%%%%%%%%%%%%%%%%%%%%%%%%%%%%%%%

\section{\label{Introduction} Introduction}

\S 1. \textbf{A historical background}. \ The \textit{Lifting Problem for
Commuting Subnormals} (LPCS) asks for necessary and sufficient conditions
for a pair of commuting subnormal operators on a Hilbert space to admit
commuting normal extensions. \ This is an old problem in operator theory. \
The aim of this paper is to answer a long-standing open problem about the
LPCS. \

To begin with, let $\mathcal{H}$ denote a complex Hilbert space and $%
\mathcal{B(H)}$ denote the set of all bounded linear operators acting on $%
\mathcal{H}$. \ For an operator $T\in\mathcal{B(H)}$, $T^*$ denotes the
adjoint of $T$. \ An operator $T\in \mathcal{B(H)}$ is said to be \textit{%
normal} if $T^*T=TT^*$, \textit{hyponormal} if its self-commutator $%
[T^*,T]\equiv T^*T-TT^*$ is positive semi-definite, and \textit{subnormal}
if there exists a Hilbert space $\mathcal{K}$ containing $\mathcal{H}$ and a
normal operator $N$ on $\mathcal{K}$ such that $N \mathcal{H} \subseteq
\mathcal{H}$ and $T=N\vert_{\mathcal{H}}$, a restriction of $N$ to $\mathcal{%
H}$. \ In this case, $N$ is called a \textit{normal extension} of $T$. \ In
1950, P.R. Halmos \cite{Hal2} introduced the notion of a subnormal operator
for the purpose of the study of dilations and extensions of operators on a
Hilbert space. \ Nowadays, the theory of subnormal operators has become an
extensive and highly developed area, which has made significant
contributions to a number of problems in functional analysis, operator
theory, mathematical physics, and several other fields.

We recall that if $\mathfrak{A}$ is a subset of $\mathcal{B(H)}$ then
the \textit{commutant} of $\mathfrak{A}$, denoted $\mathfrak{A}^{\prime }$,
is the set of operators in $\mathcal{B(H)}$ which commute with every
operator in $\mathfrak{A}$. \ If $T\in \mathcal{B(H)}$ is a subnormal
operator and $N$ is a normal extension of $T$, then we say that an operator $%
A$ in $\{T\}^{\prime }$ \textit{lifts} to $\{N\}^{\prime }$ if there exists
an operator $B$ in $\{N\}^{\prime }$ such that $B(\mathcal{H})\subseteq
\mathcal{H}$ and $A=B|_{\mathcal{H}}$. \ In 1971, J.A. Deddens \cite{De}
provided an example that not every operator in $\{T\}^{\prime }$ lifts to $%
\{N\}^{\prime }$. \ As an interesting inquiry in the commutant lifting
problem, an old problem (LPCS) in operator theory has been brought up: for
two commuting subnormal operators $T_{1}$ and $T_{2}$, find necessary and
sufficient conditions for a pair of $T_{1}$ and $T_{2}$ to admit commuting
normal extensions. \ The LPCS has been studied by many authors including
\cite{Ab}, \cite{Ab2}, \cite{AbD}, \cite{Br}, \cite{bridge}, \cite{CLY1}, \cite{CLY7}, \cite%
{CuYo1}, \cite{Fra}, \cite{Ito}, \cite{Lu3}, \cite{Lu1}, \cite{Lu2}, \cite%
{Mla}, \cite{Olin}, \cite{Slo}, \cite{Szy}, \cite{Yos}, etc. \ There are
many known examples of commuting pairs of subnormal operators which admit no
lifting (cf. M.B. Abrahamse \cite{Ab} and A.R. Lubin \cite{Lu3}). \ Also,
many sufficient conditions for the existence of a lifting have been found. \
For instance, a commuting pair of subnormal operators $T_{1}$ and $T_{2}$
admits a lifting if either $T_{1}$ or $T_{2}$ is normal (J. Bram \cite{Br}),
if either $T_{1}$ or $T_{2}$ is cyclic (T. Yoshino \cite{Yos}), if either $%
T_{1}$ or $T_{2}$ is an isometry (M. Slocinski \cite{Slo}), or if
the spectrum of either $T_{1}$ or $T_{2}$ is finitely connected and
the spectrum of its minimal normal extension is contained in the
boundary of its spectrum. \ On the other hand, in all of the known
examples of the absence of lifting, the key property missing is the
subnormality of $T_{1}+T_{2}$. \ Indeed, in 1978, A.R. Lubin
\cite{Lu1} addressed a concrete problem about the LPCS: if $T_1$ and
$T_2$ are commuting subnormal operators, do they admit commuting
normal extensions when $p(T_{1},T_{2})$ is subnormal for every
2-variable polynomial $p$, or more weakly, when $T_{1}+T_{2}$ is
subnormal\thinspace ? \ In 1994, E. Franks \cite{Fra} showed that
the first condition gives an affirmative answer; indeed, commuting
subnormal operators $T_{1}$ and $T_{2}$ admit commuting normal
extensions if $p(T_{1},T_{2})$ is subnormal for each 2-variable
polynomial $p$ of degree at most $5$. \ However, the second
condition still remains open: that is, if $T_1$ and $T_2$ are
commuting subnormal operators,
\begin{equation}\label{P-Lubin}
\hbox{does the subnormality
of $T_1+T_2$ guarantee commuting normal extensions of $T_1$ and
$T_2$\,?}
\end{equation}
What is the reason why 36 years passed while question
(\ref{P-Lubin}) remained unanswered ? \ The difficulty of
determining the
subnormality of $T_1+T_2$ is one explanation for failing to
answer question (\ref{P-Lubin}). \ Probably, the most effective way
to determine the subnormality of $T_1+T_2$ is Agler's criterion for
subnormality in \cite{Agl}.  \ However, in view of Lambert's Theorem \cite{Lam},
a main ingredient to examine the
subnormality is weighted shifts and Agler's criterion for the
weighted shifts involves quite intricately combinatorial
expressions, which are hard problems to solve. \
Thus, we had to develop the theory of 2-variable weighted shifts before
the time is ripe for answering question (\ref{P-Lubin}). \ In this
paper, we give a negative answer to question (\ref{P-Lubin}), by
using 2-variable weighted shifts together with the
disintegration-of-measure technique and ingenious combinatorial
computations.

%%%%%%%%%%%%%%%%%%%%%%%%%%%%%%%%%%%%%%%%%%%%%%%%%%%%%%%%%%%%%%%%%%%%%%%%%%%%%%%%%%

\medskip

\noindent \S 2. \textbf{Joint subnormality}. \ The notion of joint
hyponormality for the general case of $n$-tuples of operators was first
formally introduced by A. Athavale \cite{Ath}. \
%%\  Joint hyponormality and weak joint hyponormality have been studied by many authors. \
%% A. Athavale \cite{At}, J. Conway and W.
%%Szymanski \cite{CS}, R. Curto \cite{Cu}, R. Curto and W.Y. Lee
%%\cite{CuL1}, R. Curto, P. Muhly, and J. Xia \cite{CMX}, R. Douglas,
%%V. Paulsen and K. Yan \cite{DPY}, R. Douglas and K. Yan \cite{DY},
%%D. Farenick and R. McEachin \cite{FM}, C. Gu \cite{Gu2}, S.
%%McCullough and V. Paulsen \cite{McCP1},\cite{McCP2}, D. Xia
%%\cite{Xi}, and others. \
Joint hyponormality originated from the LPCS, and it has also been
considered with an aim at understanding the gap between hyponormality and
subnormality for single operators. \ In some sense, the birth of joint
hyponormality occurred with the Bram-Halmos theorem for subnormality of an
operator. \ The Bram-Halmos criterion for subnormality (cf. \cite{Br}, \cite%
{Con}) states that an operator $T\in \mathcal{B(H)}$ is subnormal if and
only if $\sum_{i,j}(T^{i}x_{j},T^{j}x_{i})\geq 0$ for all finite collections
$x_{0},x_{1},\cdots ,x_{k}\in \mathcal{H}$. \
Given an $n$-tuple $\mathbf{T}\equiv (T_{1},%
\hdots,T_{n})$ of operators on $\mathcal{H}$, we let $[\mathbf{T}^{\ast },%
\mathbf{T}]\label{bfT^*bfT}\in \mathcal{B(H\oplus \cdots \oplus H)}$ denote
the \textit{self-commutator} of $\mathbf{T}$, defined by
\begin{equation*}
\lbrack \mathbf{T}^{\ast },\mathbf{T}]:=%
\begin{pmatrix}
\hbox{$[T_1^*, T_1]$} & \hbox{$[T_2 ^*, T_1]$} & \hdots & %
\hbox{$[T_n^*,T_1]$} \\
\hbox{$[T_1^*, T_2]$} & \hbox{$[T_2 ^*, T_2]$} & \hdots & %
\hbox{$[T_n^*,T_2]$} \\
\vdots & \vdots & \ddots & \vdots \\
\hbox{$[T_1^*,T_n]$} & \hbox{$[T_2^*,T_n]$} & \hdots & \hbox{$[T_n^*,T_n]$}%
\end{pmatrix}%
,
\end{equation*}%
where $[S,T]:=ST-TS$ for $S,T\in \mathcal{B}(\mathcal{H})$. \ \ By analogy
with the case $n=1$, we shall say (\cite{Ath}, \cite{CMX}) that $\mathbf{T}$
is \textit{jointly hyponormal} (or simply, \textit{hyponormal}) if $[\mathbf{%
T}^{\ast },\mathbf{T}]$ is a positive operator on $\mathcal{H}\oplus \cdots
\oplus \mathcal{H}$. \
Thus, the
Bram-Halmos criterion can be restated as: $T\in \mathcal{B(H)}$ is subnormal
if and only if $(T,T^{2},\cdots ,T^{k})$ is hyponormal for every $k\in
\mathbb{Z}_{+}$. \
The $n$-tuple $\mathbf{T}\equiv (T_{1},%
\hdots,T_{n})$ is said to be (\textit{%
jointly}) \textit{normal} if $\mathbf{T}$ is commuting and every $T_{i}$ is
a normal operator and is said to be (\textit{jointly}) \textit{subnormal} if $\mathbf{T}$ is the restriction
of a normal $n$-tuple to a common invariant subspace, i.e., $\mathbf T$ admits commuting normal extensions. \
Thus the LPCS can be restated as:

\smallskip

\noindent
{\bf LPCS}: \textit{Find necessary
and sufficient conditions for a commuting pair of subnormal operators to be
subnormal.}

%%%%%%%%%%%%%%%%%%%%%%%%%%%%%%%%%%%%%%%%%%%%%%%%%%%%%%%%%%%%%%%%%%%%%%%%%%%%%%%%%%%%%%%%%%%%%%

\bigskip

\noindent \S 3. \textbf{A main ingredient of the paper - two variable weighted
shifts}. \ To answer question (\ref{P-Lubin}), we exploit $2$-variable
weighted shifts as a main tool. \ It is well known that the subnormality of
an arbitrary operator can be ascertained by examining the subnormality of an
associated family of weighted shifts \cite{Lam}. \ Thus, single and
multivariable weighted shifts have played an important role in the study of
the LPCS. \ They have also played a significant role in the study of
cyclicity and reflexivity, in the study of $C^{\ast }$-algebras generated by
multiplication operators on Bergman spaces, as fertile ground to test new
hypotheses, and as canonical models for theories of dilation and positivity.
\ We review the definition and basic properties of $2$-variable
weighted shifts.

Recall that given a bounded sequence of positive numbers $\alpha :\alpha
_{0},\alpha _{1},\cdots $ (called weights or a weight sequence), the
(unilateral) weighted shift $W_{\alpha }$ associated with the sequence
$\alpha $ is the operator
on $\ell ^{2}(\mathbb{Z}_{+})$ defined by $W_{\alpha }e_{n}:=\alpha
_{n}e_{n+1}$ for all $n\geq 0$, where $\{e_{n}\}_{n=0}^{\infty }$ is the
canonical orthonormal basis for $\ell ^{2}(\mathbb{Z}_{+})$. \
 We shall often write $\mathrm{shift}(\alpha
_{0},\alpha _{1},\cdots )$ to denote the weighted shift $W_{\alpha }$ with a
weight sequence $\alpha \equiv \{\alpha _{n}\}_{n=0}^{\infty }$. \
The {\it moments} of $\alpha$ are defined by
$$
\gamma_k\equiv \gamma_k(\alpha):=\alpha_0^2\cdots\alpha_{k-1}^2\quad (k\ge 1)
$$
and $\gamma_0:=1$. \
There is a well-known criterion of subnormality of weighted shifts,
due to C. Berger (cf. \cite[III.8.16]{Con})  and
independently established by R. Gellar and L.J. Wallen \cite{GeWa}:
$W_\alpha$ is subnormal if and only if there exists a probability measure
$\xi_\alpha$ supported in $[0,||W_\alpha||^2]$
(called the {\it Berger measure} of $W_\alpha$)
such that $\gamma_k(\alpha)=\int s^k d\xi_\alpha(s)$ ($k\ge 1$). \
If $W_{\alpha }$ is subnormal with Berger measure $%
\xi _{\alpha }$ and $i\geq 1$, and if we let $\mathcal{L}_{i}:=\bigvee
\{e_{n}:n\geq i\}$ denote the invariant subspace obtained by removing the
first $i$ vectors in the canonical orthonormal basis of $\ell ^{2}(\mathbb{Z}%
_{+})$, then
\begin{equation}
\hbox{the Berger
measure of $W_{\alpha }|_{\mathcal{L}_{i}}$ is $\frac{s^{i}}{\gamma
_{i}(\alpha)} d\xi_{\alpha} (s)$},  \label{Berger_restrict}
\end{equation}%
where $W_{\alpha }|_{\mathcal{L}_{i}}$ denotes the restriction of $W_{\alpha
}$ to $\mathcal{L}_{i}$. \

We now consider two bounded double-indexed sequences $\alpha \equiv \{\alpha _{%
\mathbf{k}}\},\beta \equiv \{\beta _{\mathbf{k}}\}\in \ell ^{\infty }(%
\mathbb{Z}_{+}^{2})$, $\mathbf{k}\equiv (k_{1},k_{2})\in \mathbb{Z}_{+}^{2}:=%
\mathbb{Z}_{+}\times \mathbb{Z}_{+}$ and let $\ell ^{2}(\mathbb{Z}_{+}^{2})$%
\ be the Hilbert space of square-summable complex sequences indexed by $%
\mathbb{Z}_{+}^{2}$. \ (Note that $\ell ^{2}(\mathbb{Z}_{+}^{2})$ is
canonically isometrically isomorphic to $\ell ^{2}(\mathbb{Z}_{+})\bigotimes
\ell ^{2}(\mathbb{Z}_{+})$.) \ We define a $2$-variable weighted shift $%
W_{(\alpha, \beta)} \equiv (T_{1},T_{2})$, a pair of $T_{1}$ and $T_{2}$ on $\ell ^{2}(%
\mathbb{Z}_{+}^{2})$, by %%\begin{equation*}
$T_{1}e_{\mathbf{k}}:=\alpha _{\mathbf{k}}e_{\mathbf{k+}\varepsilon _{1}}%
\text{ and }T_{2}e_{\mathbf{k}}:=\beta _{\mathbf{k}}e_{\mathbf{k+}%
\varepsilon _{2}}$, %%\end{equation*}%
where $\mathbf{\varepsilon }_{1}:=(1,0)$, $\mathbf{\varepsilon }_{2}:=(0,1)$,
and $\{e_{\mathbf{k}}\}_{\mathbf{k}\in \mathbb{Z}_{+}^{2}}$ denotes the
canonical orthonormal basis of $\ell ^{2}(\mathbb{Z}_{+}^{2})$ (see Figure
1(i)). \ Clearly,
\begin{equation}
T_{1}T_{2}=T_{2}T_{1}\Longleftrightarrow \beta _{\mathbf{k+}\varepsilon
_{1}}\alpha _{\mathbf{k}}=\alpha _{\mathbf{k+}\varepsilon _{2}}\beta _{%
\mathbf{k}}\;(\text{all }\mathbf{k}\in \mathbb{Z}_{+}^{2}).
\label{commuting}
\end{equation}%
In the sequel, we assume that all $2$-variable weighted shifts $W_{(\alpha,\beta )}$
are commuting, i.e., it satisfies the condition (\ref{commuting}%
). \ Given $\mathbf{k}\equiv (k_{1},k_{2})\in \mathbb{Z}_{+}^{2}$, the
moment of order $\mathbf{k}$ for a pair $(\alpha ,\beta )$ satisfying (\ref%
{commuting}) is defined by
\begin{equation*}
\gamma _{\mathbf{k}}\equiv \gamma _{\mathbf{k}}(\alpha,\beta):=%
\begin{cases}
1 & \text{if }k_{1}=0\text{ and }k_{2}=0; \\
\alpha _{(0,0)}^{2}\cdots \alpha _{(k_{1}-1,0)}^{2} & \text{if }k_{1}\geq 1%
\text{ and }k_{2}=0; \\
\beta _{(0,0)}^{2}\cdots \beta _{(0,k_{2}-1)}^{2} & \text{if }k_{1}=0\text{
and }k_{2}\geq 1; \\
\alpha _{(0,0)}^{2}\cdots \alpha _{(k_{1}-1,0)}^{2}\beta
_{(k_{1},0)}^{2}\cdots \beta _{(k_{1},k_{2}-1)}^{2} & \text{if }k_{1}\geq 1%
\text{ and }k_{2}\geq 1.%
\end{cases}%
\end{equation*}%
We note that, due to the commutativity condition (\ref{commuting}), $%
\gamma _{\mathbf{k}}$ can be computed using any nondecreasing path from $%
(0,0)$ to $\mathbf{k}$. \
We recall that there is a 2-variable Berger's Theorem, due to
N. Jewell and A.R. Lubin \cite{JeLu}: a 2-variable weighted shift
$W_{(\alpha,\beta)}\equiv (T_{1},T_{2})$ is subnormal
 if and only if there exists a probability measure $\mu $ (called
\textit{Berger measure} of $W_{(\alpha ,\beta )}$) defined on the $2$%
-dimensional rectangle $R=[0,||T_{1}||^{2}]\times \lbrack 0,||T_{2}||^{2}]$
such that
\begin{equation*}
\gamma _{\mathbf{k}}(\alpha, \beta)=\iint_{R}s^{k_{1}}t^{k_{2}}d\mu
(s,t)\quad \hbox{for all $\mathbf{k}\equiv(k_1, k_2)\in \mathbb{
Z}_{+}^{2}$ (called Berger's Theorem)}.
\end{equation*}%

\vskip 1cm

%%%%%%%%%%%%%%%%%%%%%%%%%%%%%%%%%%%%%%%%%%%%%%%%%%%%%%%%%%%%%%%

\setlength{\unitlength}{1mm} \psset{unit=1mm}
\begin{figure}[th]
\begin{center}
\begin{picture}(135,68)

\psline{->}(20,20)(70,20)
\psline(20,35)(68,35)
\psline(20,50)(68,50)
\psline(20,65)(68,65)
\psline{->}(20,20)(20,70)
\psline(35,20)(35,68)
\psline(50,20)(50,68)
\psline(65,20)(65,68)

\put(12,16){\footnotesize{$(0,0)$}}
\put(31.5,16){\footnotesize{$(1,0)$}}
\put(46.5,16){\footnotesize{$(2,0)$}}
\put(61.5,16){\footnotesize{$(3,0)$}}

\put(25,21){\footnotesize{$\alpha_{(0,0)}$}}
\put(40,21){\footnotesize{$\alpha_{(1,0)}$}}
\put(55,21){\footnotesize{$\alpha_{(2,0)}$}}
\put(66,21){\footnotesize{$\cdots$}}

\put(25,36){\footnotesize{$\alpha_{(0,1)}$}}
\put(40,36){\footnotesize{$\alpha_{(1,1)}$}}
\put(55,36){\footnotesize{$\alpha_{(2,1)}$}}
\put(66,36){\footnotesize{$\cdots$}}

\put(25,51){\footnotesize{$\alpha_{(0,2)}$}}
\put(40,51){\footnotesize{$\alpha_{(1,2)}$}}
\put(55,51){\footnotesize{$\alpha_{(2,2)}$}}
\put(66,51){\footnotesize{$\cdots$}}

\put(26,66){\footnotesize{$\cdots$}}
\put(41,66){\footnotesize{$\cdots$}}
\put(56,66){\footnotesize{$\cdots$}}
\put(66,66){\footnotesize{$\cdots$}}

\psline{->}(35,14)(50,14)
\put(42,10){$\rm{T}_1$}
\psline{->}(10,35)(10,50)
\put(4,42){$\rm{T}_2$}

\put(11,34){\footnotesize{$(0,1)$}}
\put(11,49){\footnotesize{$(0,2)$}}
\put(11,64){\footnotesize{$(0,3)$}}

\put(20,26){\footnotesize{$\beta_{(0,0)}$}}
\put(20,41){\footnotesize{$\beta_{(0,1)}$}}
\put(20,56){\footnotesize{$\beta_{(0,2)}$}}
\put(21,66){\footnotesize{$\vdots$}}

\put(35,26){\footnotesize{$\beta_{(1,0)}$}}
\put(35,41){\footnotesize{$\beta_{(1,1)}$}}
\put(35,56){\footnotesize{$\beta_{(1,2)}$}}
\put(36,66){\footnotesize{$\vdots$}}

\put(50,26){\footnotesize{$\beta_{(2,0)}$}}
\put(50,41){\footnotesize{$\beta_{(2,1)}$}}
\put(50,56){\footnotesize{$\beta_{(2,2)}$}}
\put(51,66){\footnotesize{$\vdots$}}

\put(10,6){(i)}

%\\\\\\\\\\\\\\\\\\\\\\\\\\\\\\\\\\

\put(73,6){(ii)}

\psline{->}(95,14)(110,14)
\put(102,10){$\rm{T}_1$}
\psline{->}(77,35)(77,50)
\put(72,42){$\rm{T}_2$}

\pspolygon*[linecolor=lightgray](95,35)(130,35)(130,70)(95,70)

\psline{->}(80,20)(130,20)
\psline(80,35)(128,35)
\psline(80,50)(128,50)
\psline(80,65)(107,65)

\psline{->}(80,20)(80,70)
\psline(95,20)(95,68)
\psline(110,20)(110,61)
\psline(125,20)(125,58)

\put(75,16){\footnotesize{$(0,0)$}}
\put(91,16){\footnotesize{$(1,0)$}}
\put(106,16){\footnotesize{$(2,0)$}}
\put(121,16){\footnotesize{$(3,0)$}}

\put(85,22){\footnotesize{$\sqrt{\frac{1}{11}}$}}
\put(100,22){\footnotesize{$\sqrt{\frac{1}{2}}$}}
\put(115,22){\footnotesize{$\sqrt{\frac{11}{16}}$}}
\put(126,21){\footnotesize{$\cdots$}}

\put(85,37){\footnotesize{$\sqrt{\frac{1}{8}}$}}
\put(100,37){\footnotesize{$\sqrt{\frac{3}{8}}$}}
\put(115,37){\footnotesize{$\sqrt{\frac{5}{12}}$}}
\put(126,36){\footnotesize{$\cdots$}}

\put(85,52){\footnotesize{$\sqrt{\frac{1}{16}}$}}
\put(100,52){\footnotesize{$\sqrt{\frac{5}{12}}$}}
\put(115,52){\footnotesize{$\sqrt{\frac{9}{20}}$}}
\put(126,51){\footnotesize{$\cdots$}}

\put(85,66){\footnotesize{$\cdots$}}
\put(100,66){\footnotesize{$\cdots$}}

\put(80,27){\footnotesize{$\sqrt{x}$}}
\put(80,41){\footnotesize{$\sqrt{\frac{3}{4}}$}}
\put(80,56){\footnotesize{$\sqrt{\frac{44}{48}}$}}
\put(81,66){\footnotesize{$\vdots$}}

\put(95,27){\footnotesize{$\sqrt{\frac{11}{8}x}$}}
\put(95,41){\footnotesize{$\sqrt{\frac{3}{8}}$}}
\put(95,56){\footnotesize{$\sqrt{\frac{5}{12}}$}}
\put(96,66){\footnotesize{$\vdots$}}

\put(110,27){\footnotesize{$\sqrt{\frac{33}{32}x}$}}
\put(110,41){\footnotesize{$\sqrt{\frac{5}{12}}$}}
\put(110,56){\footnotesize{$\sqrt{\frac{9}{20}}$}}

\put(2,1)
{Figure 1. (i) The weight diagram of a generic 2-variable weighted shift;}
\put(2,-3) {(ii) The weight diagram of the 2-variable weighted shift $\left(T_{1},T_{2}\right)$
given in Theorem \protect \ref{Ans-Lubin}.}

\put(112.5,63){$\left(T_{1},T_{2}\right) |_{\mathcal{M}\cap\mathcal{N}}$}

\end{picture}
\end{center}
\par
\label{Figure Lubin}
\end{figure}

\bigskip

\noindent \S 4. \textbf{A description of the main theorem}. \ For an
arbitrary commuting $2$-variable weighted shift $W_{(\alpha,\beta )}\equiv
(T_{1},T_{2})$, let $\left( T_{1},T_{2}\right) |_{\mathcal{R}}$
denote the restriction of $W_{\left( \alpha ,\beta \right) }$ to $\mathcal{R}%
,$ where $\mathcal{R}$ is a common invariant subspace of $\ell ^{2}(\mathbb{Z}_{+}^{2})$ for
$T_1$ and $T_2$. \
Throughout the paper, we write%
\begin{eqnarray*}
\mathcal{M} &:= &\bigvee \bigl\{e_{(k_{1},k_{2})}\in \ell ^{2}(\mathbb{Z}%
_{+}^{2}):k_{1}\geq 0,k_{2}\geq 1\bigr\}; \\
\mathcal{N} &:= &\bigvee \bigl\{e_{(k_{1},k_{2})}\in \ell ^{2}(\mathbb{Z}%
_{+}^{2}):k_{1}\geq 1,k_{2}\geq 0\bigr\}.
\end{eqnarray*}%
To answer question (\ref{P-Lubin}), we use the $2$-variable weighted shift $%
W_{(\alpha,\beta )}\equiv (T_{1},T_{2})$ with the weight diagram given by Figure
1(ii), where $\alpha _{\left( 0,1\right) }:=\sqrt{\frac{1}{8}}$, {the $0$-th
horizontal slice of $T_{1}$ is a weighted shift }$W_{a}:=\mathrm{shift}%
\left( \alpha _{\left( 0,0\right) },\alpha _{\left( 1,0\right) },\cdots
\right) ${\ whose weight sequence $a\equiv \{a_{n}\}_{n=0}^{\infty }$ is
given by
\begin{equation*}
\begin{tabular}{l}
$a_{n}:=\left\{
\begin{tabular}{ll}
$\sqrt{\frac{1}{11}}$ & $\text{if }n=0$ \\
$\sqrt{\frac{4^{n}+2^{n}+2}{4^{n}+2^{n+1}+8}}$ & $\text{if }n\geq 1$,%
\end{tabular}%
\right. $%
\end{tabular}
\label{alpha1}
\end{equation*}%
and }the $0$-th vertical slice of $T_{2}$ is a weighted shift $W_{b}:=\mathrm{%
shift}\left( \beta _{\left( 0,0\right) },\beta _{\left( 0,1\right) },\cdots
\right) $ whose weight sequence $b\equiv \{b_{n}\}_{n=0}^{\infty }$ is given
by
\begin{equation*}
\begin{tabular}{l}
$b_{n}:=\left\{
\begin{tabular}{ll}
$\sqrt{x }$ \  $\left( {x >0}\right) $ & $\text{if }n=0$ \\
$\sqrt{\frac{10\cdot 2^{2n}+2^{n}+1}{10\cdot 2^{2n}+2^{n+1}+4}}$ & $\text{if
}n\geq 1$.%
\end{tabular}%
\right. $%
\end{tabular}
\label{beta1}
\end{equation*}%
Now, we define $\left( T_{1},T_{2}\right) |_{\mathcal{M}\cap \mathcal{N}}$. \
For both of $0$-th {horizontal and vertical slices of $\left( T_{1},T_{2}\right)
|_{\mathcal{M}\cap \mathcal{N}}$,
we put a
weighted shift $W_{c}$ whose weight sequence $c\equiv
\{c_{n}\}_{n=0}^{\infty }$ is given by
\begin{equation*}
\begin{tabular}{l}
$c_{n}:=\sqrt{\frac{2^{n+1}+1}{2^{n+2}+4}}$ \ ($n\geq 0$):%
\end{tabular}
\label{alpha2}
\end{equation*}%
in other words,%
\begin{equation*}
\begin{tabular}{l}
$W_{c}:=\mathrm{shift}\left( \alpha _{\left( 1,1\right) },\alpha _{\left(
2,1\right) },\cdots \right) =\mathrm{shift}\left( \beta _{\left( 1,1\right)
},\beta _{\left( 1,2\right) },\cdots \right) =\mathrm{shift}\left( \sqrt{%
\frac{3}{8}},\sqrt{\frac{5}{12}},\cdots \right) $.%
\end{tabular}%
\end{equation*}%
\
{In turn, both of the $i$-th horizontal and vertical slices of }$\left( T_{1},T_{2}\right)
|_{\mathcal{M}\cap \mathcal{N}}${\ are defined by a restriction of }$W_{c}${\ to
the subspace $\mathcal{L}_{i}:=\bigvee \{e_{n}:n\geq i\}$, }that is,
\begin{equation*}
\begin{tabular}{l}
$W_{c}|_{{\mathcal{L}_{i}}}=\mathrm{shift}\left( \alpha _{\left( i,i\right)
},\alpha _{\left( i+1,i\right) },\cdots \right) =\mathrm{shift}\left( \beta
_{\left( i,i\right) },\beta _{\left( i,i+1\right) },\cdots \right) $ \\
\\
$=\mathrm{shift}\left( \sqrt{\frac{2^{i+1}+1}{2^{i+2}+4}},\sqrt{\frac{%
2^{i+2}+1}{2^{i+3}+4}},\cdots \right) $.%
\end{tabular}%
\end{equation*}%
{Then, the remaining weights} of $T_{1}$ and $T_2$ are automatically determined by
the commutativity of $T_{1}$ and $T_{2}$. \
Via Berger's Theorem, we can show that}

\begin{itemize}
\item[(a)] $W_{a}$ is subnormal with the $4$-atomic Berger measure
$$
\xi
_{a}:=\frac{3}{4}\delta _{0}+\frac{2}{11}\delta _{\frac{1}{4}}+\frac{1}{22}%
\delta _{\frac{1}{2}}+\frac{1}{44}\delta _{1};
$$

\item[(b)] $W_{b}$ is subnormal with the $4$-atomic Berger measure
$$
\xi _{b}:=\left( 1-\frac{15x}{8}\right) \delta _{0}+x \left( \delta
_{\frac{1}{4}}+\frac{1}{4}\delta _{\frac{1}{2}}+\frac{5}{8}\delta
_{1}\right);
$$

\item[(c)] $W_{c}$ is subnormal with the $2$-atomic Berger measure
$$
\xi _{c}:=\frac{1}{2}\delta _{\frac{1}{4}}+\frac{1}{2}\delta
_{\frac{1}{2}},
$$
\end{itemize}
where $\delta_p$ denotes Dirac measure at $p$. \

\medskip

\noindent
$\{${\it Proof}: For $\ell \geq 1$,
\begin{equation}
\begin{tabular}{l}
$\int s^{\ell }d\xi _{a}(s)=\gamma _{\ell }\left( W_{a}\right) = a_{0}^{2}a_{1}^{2}\cdots a_{\ell
-2}^{2}a_{\ell -1}^{2}$ \\
$=\frac{1}{11}\cdot \frac{4+2+2}{4+2^{2}+8}\cdot \frac{4^{2}+2^{2}+2%
}{4^{2}+2^{3}+8}\cdots \frac{4^{\ell -2}+2^{\ell -2}+2}{%
4^{\ell -2}+2^{\ell -1}+8}\cdot \frac{4^{\ell
-1}+2^{\ell -1}+2}{4^{\ell -1}+2^{\ell }+8}$ \\
$=\frac{1}{11}\cdot \frac{8\left( \frac{1}{4}\right) ^{2}+2\left( \frac{1}{2}%
\right) ^{2}+1}{8\left( \frac{1}{4}\right) +2\left( \frac{1}{2}\right) +1}%
\cdot \frac{8\left( \frac{1}{4}\right) ^{3}+2\left( \frac{1}{2}\right) ^{3}+1%
}{8\left( \frac{1}{4}\right) ^{2}+2\left( \frac{1}{2}\right) ^{2}+1}\cdots
\frac{8\left( \frac{1}{4}\right) ^{\ell -1}+2\left( \frac{1}{2}\right)
^{\ell -1}+1}{8\left( \frac{1}{4}\right) ^{\ell -2}+2\left( \frac{1}{2}%
\right) ^{\ell -2}+1}\cdot \frac{8\left( \frac{1}{4}\right) ^{\ell }+2\left(
\frac{1}{2}\right) ^{\ell }+1}{8\left( \frac{1}{4}\right) ^{\ell -1}+2\left(
\frac{1}{2}\right) ^{\ell -1}+1}$ \\
$=\frac{1}{44}\cdot \left( 8\left( \frac{1}{4}\right) ^{\ell }+2\left( \frac{%
1}{2}\right) ^{\ell }+1\right) =\frac{2}{11}\cdot (\frac{1}{4})^{\ell }+%
\frac{1}{22}\cdot (\frac{1}{2})^{\ell }+\frac{1}{44}$,%
\end{tabular}
\label{cal}
\end{equation}%
giving (a). \
The assertions (b) and (c) follow from the same argument as (a).$\}$

\bigskip

Then, our main theorem follows:

\begin{theorem}
\label{Ans-Lubin} Let $W_{(\alpha,\beta)}\equiv (T_{1},T_{2})$ be given by Figure %
\ref{Figure Lubin}\textrm{(ii)}. Then, we have:

\begin{itemize}
\item[(i)] $T_{1}$ and $T_{2}$ are both subnormal if and only if\ $0<x\leq
\frac{8}{33}$;

\item[(ii)] $(T_{1},T_{2})$ is subnormal if and only if\ $0<x\leq \frac{2}{%
11}$;

\item[(iii)] $T_{1}+T_{2}$ is subnormal if $0<x\leq \frac{2}{11}+\varepsilon $
for some $\varepsilon >0$.
\end{itemize}
\end{theorem}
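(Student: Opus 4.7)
The plan is to treat the three assertions with the different tools developed in the excerpt.

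For (i), I would invoke Lambert's theorem: $T_{1}$ (respectively $T_{2}$) is subnormal iff every horizontal (respectively vertical) slice is a subnormal weighted shift, checkable via Berger's theorem. The $0$-th horizontal slice of $T_{1}$ is $W_{a}$, subnormal by (a); every higher horizontal slice has the form $\mathrm{shift}(\alpha_{(0,j)},W_{c}|_{\mathcal{L}_{j-1}})$, and since $b_{0}=\sqrt{x}$ appears only on the $0$-th vertical column, commutativity forces the leading weight $\alpha_{(0,j)}$ to be independent of $x$ for $j\ge 1$. Prepending this $x$-free leading weight to $W_{c}|_{\mathcal{L}_{j-1}}$, whose Berger measure is given by (\ref{Berger_restrict}), yields a subnormal shift by a direct computation, with no $x$-constraint. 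For $T_{2}$ the analogous $i$-th vertical slice is $\mathrm{shift}(\beta_{(i,0)},W_{c}|_{\mathcal{L}_{i-1}})$; the binding case is $i=1$, where prepending $\beta_{(1,0)}^{2}=11x/8$ to $W_{c}$ requires $\beta_{(1,0)}^{2}\int s^{-1}\,d\xi_{c}=33x/8\le 1$, i.e., $x\le 8/33$, and every $i\ge 2$ is strictly less restrictive.

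For (ii), I would apply the $2$-variable Berger theorem. Any joint Berger measure $\mu$ for $(T_{1},T_{2})$ must have $s$-marginal $\xi_{a}$ and $t$-marginal $\xi_{b}$, hence is supported on $\{0,1/4,1/2,1\}^{2}$. The decisive observation is that the $i$-th horizontal and vertical slices of $(T_{1},T_{2})|_{\mathcal{M}\cap\mathcal{N}}$ both equal $W_{c}|_{\mathcal{L}_{i}}$, so path-independence of the moment along any nondecreasing lattice path gives $\gamma_{(k_{1},k_{2})}\bigl((T_{1},T_{2})|_{\mathcal{M}\cap\mathcal{N}}\bigr)=\gamma_{k_{1}+k_{2}}(W_{c})$; this forces the diagonal joint Berger measure $\nu=\tfrac{1}{2}\delta_{(1/4,1/4)}+\tfrac{1}{2}\delta_{(1/2,1/2)}$. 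The identity $st\,d\mu=\gamma_{(1,1)}\,d\nu$ on $\{s,t>0\}$ then pins down the interior atoms of $\mu$, and the two marginal conditions determine the axis atoms. The single binding non-negativity constraint is $\mu(\{(1/4,0)\})=2/11-x\ge 0$, i.e., $x\le 2/11$.

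For (iii), which is the crux of the paper, I would appeal to Lambert's theorem and verify that for every basis vector $e_{\mathbf{k}}$ the sequence $\|(T_{1}+T_{2})^{n}e_{\mathbf{k}}\|^{2}=\sum_{i=0}^{n}\binom{n}{i}^{2}\gamma_{\mathbf{k}+(n-i)\varepsilon_{1}+i\varepsilon_{2}}/\gamma_{\mathbf{k}}$ is a Hausdorff moment sequence on $[0,\|T_{1}+T_{2}\|^{2}]$. The signed measure $\mu$ from (ii) still represents the $2$-variable moments $\gamma_{\mathbf{k}}$ for $x>2/11$, so these sums can be rewritten as $\iint Q_{n}(s,t)\,d\mu(s,t)$, where $Q_{n}(s,t):=\sum_{i}\binom{n}{i}^{2}s^{n-i}t^{i}$; Chu-Vandermonde and related binomial identities should reduce Hausdorff-positivity of the moment sequence, and the Agler-type positive-semidefiniteness of the associated Hankel matrices, to a finite list of polynomial inequalities in $x$. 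At $x=2/11$ these inequalities hold (since $(T_{1},T_{2})$ is then jointly subnormal, whence $T_{1}+T_{2}$ inherits subnormality from a commuting normal extension), and the goal is to show they persist with a strict margin so that continuity in $x$ yields some $\varepsilon>0$ for which validity extends to $x\le 2/11+\varepsilon$.

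The main obstacle is (iii). The joint Berger measure has become signed and does not pushforward to a positive measure for $T_{1}+T_{2}$ under any obvious map, so the $2$-variable framework does not deliver subnormality of $T_{1}+T_{2}$ directly; one must verify Hausdorff moment conditions by direct computation, relying on Chu-Vandermonde to control the squared-binomial coefficients and on the precise numerical values of $\xi_{a},\xi_{b},\xi_{c}$ engineered in Figure \ref{Figure Lubin}\textrm{(ii)} to provide a strict positivity margin at $x=2/11$.
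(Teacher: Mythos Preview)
Your treatments of (i) and (ii) are essentially correct. For (ii) the paper proceeds by two successive applications of the subnormal backward extension lemma (Lemma~\ref{backext}), first passing from $\mathcal{M}\cap\mathcal{N}$ to $\mathcal{M}$ and then to all of $\ell^2(\mathbb{Z}_+^2)$; your direct determination of the candidate joint measure from the two marginals together with the diagonal Berger measure of $(T_1,T_2)|_{\mathcal{M}\cap\mathcal{N}}$ is a valid and somewhat more elementary alternative, and it yields the same binding constraint $\tfrac{2}{11}-x\ge 0$.

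The genuine gap is in (iii). Your continuity argument does not close: the Agler (equivalently Hausdorff) conditions form an \emph{infinite} family indexed by $n\ge 1$, and strict positivity of each $P_n$ at $x=\tfrac{2}{11}$ does not by itself yield a single $\varepsilon>0$ valid for all $n$ simultaneously. Reducing to ``a finite list of polynomial inequalities in $x$'' is precisely the content of the proof, and Chu--Vandermonde alone does not accomplish it. The paper supplies two ingredients you are missing. First, since $(T_1,T_2)|_{\mathcal{M}}$ is jointly subnormal for \emph{every} $x$ (the parameter enters only through $\beta_{(0,0)}$), Agler's criterion need only be checked at the vectors $e_{(k,0)}$. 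Second, and crucially, after Chu--Vandermonde produces $\binom{2\ell}{\ell}$, the paper represents this as $\binom{2\ell}{\ell}=\int_0^4 s^{\ell}\,d\mu(s)$ with $d\mu(s)=\pi^{-1}(4s-s^2)^{-1/2}\,ds$ (the Berger measure of $U_+\otimes I+I\otimes U_+$), which turns $P_n(k,0)$ into a closed expression. The decisive estimate is that
\[
\Bigl(\tfrac{15}{16}\Bigr)^{-n}\int_0^4 \Bigl(1-\tfrac{s}{16}\Bigr)^{n}d\mu(s)\longrightarrow\infty
\quad\text{and}\quad
\Bigl(\tfrac{7}{8}\Bigr)^{-n}\int_0^4 \Bigl(1-\tfrac{s}{8}\Bigr)^{n}d\mu(s)\longrightarrow\infty,
\]
which forces $P_n(k,0)\ge 0$ for all $n$ beyond some fixed $n_0$, \emph{uniformly in $k$ and $x$}. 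Only the finitely many conditions $1\le n\le n_0$ then remain, and for those the strictly positive lower bound $\varepsilon=\min_{1\le n\le n_0}\int_0^4(1-s/16)^n\,d\mu$ supplies the margin. Without this asymptotic mechanism for dispatching the tail in $n$, your ``strict margin plus continuity'' step is unjustified.
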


\bigskip

Consequently, Theorem \ref{Ans-Lubin} proves that there exists a commuting
pair $(T_{1},T_{2})$ of subnormal operators such that $T_{1}+T_{2}$ is
subnormal, but the pair $(T_{1},T_{2})$ is not subnormal; that is, the pair
$(T_{1},T_{2})$ does not admit commuting normal extensions. \ This answers
Lubin's question (\ref{P-Lubin}) in the negative.

\medskip

In Section 2, we give a proof of Theorem \ref{Ans-Lubin}.

%%%%%%%%%%%%%%%%%%%%%%%%%%%%%%%%%%%%%%%%%%%%%%%%%%%%%%%%%%%%%%%%%%%%%%%%%%%%%%%%%%%%%%%%%%

\section{\label{main} Proof of Theorem \protect\ref{Ans-Lubin}}

To examine the subnormality of $2$-variable weighted shifts, we need some
definitions.

\medskip

\begin{itemize}
\item[(i)] Let $\mu $ and $\nu $ be two positive measures on a set $X\equiv
\mathbb{R}_{+}$. \ We say that $\mu \leq \nu $ on $X$ if $\mu (E)\leq \nu
(E) $ for each Borel subset $E\subseteq X$; equivalently, $\mu \leq \nu $ if
and only if $\int fd\mu \leq \int fd\nu $ for all $f\in C(X)$ such that $%
f\geq 0$ on $X$, where $C(X)$ denotes the set of all continuous functions on
$X$.

\item[(ii)] Let $\mu$ be a probability measure on $X\times Y\equiv \mathbb{R%
}_{+}\times \mathbb{R}_{+}$ and assume that $\frac{1}{t}\in L^{1}(\mu )$, i.e.,
$\iint \frac{1}{t}\, d\mu(s,t)<\infty$. \
The \textit{extremal measure} $\mu _{ext}$ (which is also a probability
measure) on $X\times Y$ is given by
\begin{equation*}
d\mu _{ext}(s,t):=\frac{1}{t\left\Vert \frac{1}{t}\right\Vert _{L^{1}(\mu )}}%
d\mu (s,t).
\end{equation*}

\item[(iii)] Given a measure $\mu $ on $X\times Y$, the \textit{marginal
measure} $\mu ^{X}$ is given by $\mu ^{X}:=\mu \circ \pi _{X}^{-1}$, where $%
\pi_{X}:X\times Y\rightarrow X$ is the canonical projection onto $X$. \ Thus
$\mu ^{X}(E)=\mu (E\times Y)$ for every $E\subseteq X$.
\end{itemize}

\bigskip

We provide several auxiliary lemmas which are needed for the proof
of Theorem \ref{Ans-Lubin}. \ Recall the subnormal backward
extension of 1-variable weighted shifts (cf. \cite{QHWS}):

If
$\hbox{shift}\,(\alpha
_{1},\alpha _{2},\cdots )$ is subnormal with Berger measure $\xi $, then $%
\hbox{shift}\,(\alpha _{0},\alpha _{1},\alpha _{2},\cdots )$ is subnormal if
and only if
\begin{equation}
\frac{1}{s}\in L^{1}(\xi )\ \ \hbox{and}\ \ \alpha _{0}^{2}\leq \left(
\left\vert \left\vert \frac{1}{s}\right\vert \right\vert _{L^{1}(\xi
)}\right) ^{-1}.  \label{sbe}
\end{equation}

\bigskip

The following lemma is the $2$-variable version of (\ref{sbe}).

\medskip

\begin{lemma}
\label{backext} \textrm{(}\cite[Proposition 3.10]{CuYo1}\textrm{)} \ \textrm{%
(Subnormal backward extension of 2-variable weighted shifts)} \ Assume that $%
W_{(\alpha,\beta )}\equiv (T_{1},T_{2})$ is a commuting pair of subnormal operators
and $(T_{1},T_{2})|_{\mathcal{M}}$ is subnormal with associated Berger
measure $\mu _{\mathcal{M}}$. \
Then, $W_{(\alpha,\beta )}$ is subnormal if and only
if the following conditions hold:

\begin{itemize}
\item[(i)] $\ \frac{1}{t}\in L^{1}(\mu _{\mathcal{M}})$;

\item[(ii)] $\ \beta _{\left( 0,0\right) }^{2}\leq (\left\Vert \frac{1}{t}%
\right\Vert _{L^{1}(\mu _{\mathcal{M}})})^{-1}$;

\item[(iii)] $\ \beta _{\left( 0,0\right) }^{2}\left\Vert \frac{1}{t}%
\right\Vert _{L^{1}(\mu _{\mathcal{M}})}(\mu _{\mathcal{M}})_{ext}^{X}\leq
\xi _{0}$,
\end{itemize}
where $\xi _{0}$ is the Berger measure of $%
\hbox{\rm
shift}\,(\alpha _{\left( 0,0\right) },\alpha _{\left( 1,0\right) },\cdots )$.
In the case when $W_{(\alpha,\beta )}$ is subnormal, the
Berger measure $\mu $ of $W_{(\alpha,\beta )}$ is given by%
\begin{equation*}
d\mu (s,t)=\beta _{\left( 0,0\right) }^{2}\left\Vert \frac{1}{t}\right\Vert
_{L^{1}(\mu _{\mathcal{M}})}d(\mu _{\mathcal{M}})_{ext}(s,t)+\Bigl(d\xi
_{0}(s)-\beta _{\left( 0,0\right) }^{2}\left\Vert \frac{1}{t}\right\Vert
_{L^{1}(\mu _{\mathcal{M}})}d(\mu _{\mathcal{M}})_{ext}^{X}(s)\Bigr)d\delta
_{0}(t).
\end{equation*}
\end{lemma}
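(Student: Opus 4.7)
The plan is to work directly with the $2$-variable Berger's Theorem and reconstruct the candidate Berger measure $\mu$ of $W_{(\alpha,\beta)}$ from the given data: the Berger measure $\mu_{\mathcal{M}}$ of the restriction to $\mathcal{M}$ and the Berger measure $\xi_0$ of the $0$-th horizontal slice $\mathrm{shift}(\alpha_{(0,0)},\alpha_{(1,0)},\cdots)$ (which exists because $T_1$ is subnormal by hypothesis). The starting point is a $2$-variable analogue of (\ref{Berger_restrict}): if $W_{(\alpha,\beta)}$ is subnormal with Berger measure $\mu$, then
\[
d\mu_{\mathcal{M}}(s,t)=\frac{t}{\beta_{(0,0)}^{2}}\,d\mu(s,t).
\]
I would verify this by a direct moment comparison, using path-independence of $\gamma_{\mathbf{k}}$ along a path that ascends once before moving horizontally: $\gamma_{(k_1,k_2)}\bigl((\alpha,\beta)|_{\mathcal{M}}\bigr)=\gamma_{(k_1,k_2+1)}(\alpha,\beta)/\beta_{(0,0)}^{2}$.

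For necessity, I would decompose $\mu=\mu_1+\mu_2$, where $\mu_1:=\mu|_{\{t>0\}}$ and $\mu_2:=\mu|_{\{t=0\}}$; the latter is automatically of the form $d\nu(s)\,d\delta_0(t)$ for some nonnegative Borel measure $\nu$ on $X$. Inverting the displayed identity on $\{t>0\}$ gives $d\mu_1(s,t)=(\beta_{(0,0)}^{2}/t)\,d\mu_{\mathcal{M}}(s,t)$, whose finiteness forces $1/t\in L^{1}(\mu_{\mathcal{M}})$, i.e.\ (i). The total mass $\beta_{(0,0)}^{2}\|1/t\|_{L^{1}(\mu_{\mathcal{M}})}$ cannot exceed $\mu(R)=1$, giving (ii). For (iii), I would take marginals onto $X$: since $\int s^{k_1}\,d\mu^{X}=\gamma_{(k_1,0)}(\alpha,\beta)$ and the Hausdorff moment problem on $[0,\|T_1\|^2]$ is determinate, $\mu^{X}=\xi_0$. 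Rewriting $\mu_1^{X}=\beta_{(0,0)}^{2}\|1/t\|_{L^{1}(\mu_{\mathcal{M}})}\,(\mu_{\mathcal{M}})_{ext}^{X}$ from the definition of the extremal measure, the nonnegativity $\nu=\xi_0-\mu_1^{X}\geq 0$ is precisely (iii).

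For sufficiency, I would define $\mu$ by the stated formula and verify directly that it realizes the moments $\gamma_{\mathbf{k}}(\alpha,\beta)$, so that Berger's Theorem delivers subnormality. Conditions (i)--(iii) ensure $\mu$ is a nonnegative measure, and adding the two masses yields $\xi_0(X)=1$. When $k_2\geq 1$, the $\delta_0(t)$-piece contributes nothing and
\[
\iint s^{k_1}t^{k_2}\,d\mu=\beta_{(0,0)}^{2}\iint s^{k_1}t^{k_2-1}\,d\mu_{\mathcal{M}}=\beta_{(0,0)}^{2}\gamma_{(k_1,k_2-1)}\bigl((\alpha,\beta)|_{\mathcal{M}}\bigr)=\gamma_{(k_1,k_2)}(\alpha,\beta);
\]
when $k_2=0$ the integral reduces to $\int s^{k_1}\,d\mu^{X}=\int s^{k_1}\,d\xi_0=\gamma_{(k_1,0)}(\alpha,\beta)$, by construction of the $X$-marginal of the two pieces.

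The main obstacle is the necessity direction, specifically the clean disintegration of $\mu$ across the hyperplane $\{t=0\}$ and the identification $\mu^{X}=\xi_0$ via determinacy of the one-dimensional moment problem; the rest is bookkeeping with the extremal-measure algebra. This is the natural $2$-variable counterpart of the $1$-variable criterion (\ref{sbe}), with the single scalar $\|1/s\|_{L^{1}(\xi)}$ replaced by both an integrability condition in $t$ and a measure-theoretic inequality in the $X$-marginal.
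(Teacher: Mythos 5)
The paper does not actually prove this lemma --- it is quoted from \cite[Proposition 3.10]{CuYo1} --- so there is no internal proof to compare against; your argument is correct and is essentially the standard proof of that cited result. The key identity $d\mu_{\mathcal{M}}(s,t)=\frac{t}{\beta_{(0,0)}^{2}}\,d\mu(s,t)$, the splitting of $\mu$ across $\{t=0\}$ for necessity, and the direct moment verification for sufficiency all go through; the only steps worth making explicit are the Stone--Weierstrass/determinacy argument each time you pass from equality of moments on the compact rectangle to equality of measures (both for this identity and for $\mu^{X}=\xi_{0}$), and the observation --- already implicit in your sufficiency argument --- that (ii) follows from (i) and (iii) by taking total masses, so its role is only in the necessity direction.
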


On the other hand, we also employ disintegration-of-measure techniques. \ To
do so, we need to review some basic properties on disintegration of measures;
most of the discussion is taken from \cite[VII.2, pp. 317-319]{Con}. \ Let $%
X $ and $Z$ be compact metric spaces and let $\mu $ be a positive regular
Borel measure on $Z$. \ Let $\mathcal{L}^{1}(\mu )$ denote the set of all
Borel functions $f$ on $Z$ such that $\int |f|d\mu <\infty $ and let $%
L^{1}(\mu )$ be the corresponding Lebesgue space of the equivalence classes
of those functions. \ For a Borel mapping $\phi :Z\rightarrow X$, let $\nu $
be the Borel measure $\mu \circ \phi ^{-1}$ on $X$; that is,
\begin{equation}
\nu (\Delta ):=\mu (\phi ^{-1}(\Delta ))
\end{equation}%
for every Borel set $\Delta \subseteq X$. \ If $f\in \mathcal{L}^{1}(\mu )$
then the map $\psi \mapsto \int_{Z}(\psi \circ \phi )f\,d\mu $ defines a
bounded linear functional on $L^{\infty }(\nu )$. \ When restricted to
characteristic functions $\chi _{\Delta }$ in $L^{\infty }(\nu )$, $\Delta
\mapsto \int_{Z}(\chi _{\Delta }\circ \phi )f\,d\mu =\int_{\phi ^{-1}(\Delta
)}f\,d\mu $ is a Borel measure on $X$ which is absolutely continuous with
respect to $\nu $. \
Then, there exists a unique element $E(f)$ in $%
L^{1}(\nu )$ such that $\int_{Z}(\chi _{\Delta }\circ \phi )f\,d\mu
=\int_{X}\chi _{\Delta }E(f)d\nu $ for every Borel set $\Delta $ of $X$. \
Via convergence theorems, one can show that
\begin{equation}\label{disint}
\int_{Z}(\psi \circ \phi )\,f\,d\mu =\int_{X}\psi \,E(f)d\nu
\end{equation}%
for all $\psi \in L^{\infty }(\nu )$. \ This defines a map $E:\mathcal{L}%
^{1}(\mu )\rightarrow L^{1}(\nu )$ called the \textit{expectation operator}.
\ We write $M(Z)$ for the set of all regular Borel measures on $Z$. \
A \textit{disintegration of the measure} $\mu $ with respect to $\phi $ is a
function $x\mapsto \lambda _{x}$ from $X$ to $M(Z)$ such that $\lambda _{x}$
is a probability measure for each $x\in X$ and $E(f)(x)=\int_{Z}f\,d\lambda
_{x}$ a.e. $[\nu ]$ for each $f\in \mathcal{L}^{1}(\mu )$. \ Then we have the
existence and uniqueness of the disintegration of a measure
(cf. \cite[Theorem VII.2.11]{Con}): (i) given a regular Borel measure $\mu $
on a compact metric space $Z$, and a Borel function $\phi $ from $Z$ into a
compact metric space $X$, there is a disintegration $x\mapsto \lambda _{x}$
of $\mu $ with respect to $\phi $; (ii) if $x\mapsto \lambda _{x}^{^{\prime }}$
is another disintegration of $\mu $ with respect to $\phi $, then $\lambda
_{x}=\lambda _{x}^{^{\prime }}$ a.e. $[\nu ]$.

\medskip

The following lemma is useful in the sequel.

\begin{lemma}
\label{Lemma1} If $\mu$ is a positive regular Borel measure defined on $%
Z:= X\times Y\equiv \mathbb{R}_{+}\times \mathbb{R}_{+}$ and $\frac{1}{t}\in L^1(\mu)$, then
\begin{equation*}
\left\Vert \frac{1}{t}\right\Vert _{L^{1}(\mu )}=\left\Vert \frac{1}{t}%
\right\Vert _{L^{1}\left( \mu ^{Y}\right) }\text{,}
\end{equation*}%
where $\mu ^{Y}:=\mu \circ \pi _{Y}^{-1}$ and $\pi _{Y}:Z\rightarrow Y$ is
the canonical projection onto $Y$.
\end{lemma}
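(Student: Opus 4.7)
The plan is to invoke the standard pushforward (change-of-variables) formula, exploiting the crucial fact that the integrand $(s,t)\mapsto 1/t$ on $Z=X\times Y$ depends only on the second coordinate.

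First, I would define $g:Y\to[0,\infty]$ by $g(t):=1/t$, so that the function $1/t$ on $Z$ equals $g\circ\pi_{Y}$. The pushforward identity for $\mu^{Y}=\mu\circ\pi_{Y}^{-1}$ then asserts that
\[
\int_{Z}(g\circ\pi_{Y})\,d\mu \;=\; \int_{Y}g\,d\mu^{Y},
\]
which is exactly the claimed equality of $L^{1}$-norms. To verify this identity rigorously, one observes that it holds tautologically when $g=\chi_{E}$ (since by definition $\mu^{Y}(E)=\mu(\pi_{Y}^{-1}(E))=\int_{Z}\chi_{E}\circ\pi_{Y}\,d\mu$), extends it by linearity to nonnegative simple functions, and then passes to the limit via the monotone convergence theorem applied to an increasing sequence of simple functions $g_{n}\nearrow g$. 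The hypothesis $1/t\in L^{1}(\mu)$ guarantees that both sides are finite (and, in particular, that $\mu^{Y}$ places no mass on $\{t=0\}$).

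Alternatively, the disintegration framework recalled just above the lemma yields essentially a one-line argument: writing $d\mu(s,t)=d\lambda_{t}(s)\,d\mu^{Y}(t)$ with each $\lambda_{t}$ a probability measure on $X$, one obtains
\[
\Bigl\|\tfrac{1}{t}\Bigr\|_{L^{1}(\mu)} \;=\; \int_{Y}\Bigl(\int_{X}\tfrac{1}{t}\,d\lambda_{t}(s)\Bigr)d\mu^{Y}(t) \;=\; \int_{Y}\tfrac{1}{t}\,d\mu^{Y}(t) \;=\; \Bigl\|\tfrac{1}{t}\Bigr\|_{L^{1}(\mu^{Y})},
\]
where the middle equality uses that each $\lambda_{t}$ has total mass $1$.

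There is no substantive obstacle here: this is a short measure-theoretic preparation for later applications of Lemma \ref{backext}, where the quantity $\|1/t\|_{L^{1}(\mu_{\mathcal{M}})}$ arising from a full $2$-variable Berger measure will need to be computed against the marginal. The only care required is to confirm that the integrability hypothesis legitimizes every appeal to Fubini / monotone convergence, and that nonnegativity of the integrand allows one to bypass any concern about the potential singularity at $t=0$.
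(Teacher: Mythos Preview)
Your proposal is correct. Your ``alternative'' disintegration argument is essentially identical to the paper's proof: the paper sets $\phi=\pi_Y$, invokes the disintegration $t\mapsto\lambda_t$, and computes $\|1/t\|_{L^1(\mu)}=\int_Y E(1/t)\,d\mu^Y=\int_Y\bigl(\int_X \tfrac{1}{t}\,d\lambda_t(s)\bigr)d\mu^Y(t)=\int_Y \tfrac{1}{t}\,d\mu^Y$, exactly as you wrote.

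Your first argument, however, is genuinely different and in fact more elementary. By recognizing that the integrand factors as $g\circ\pi_Y$, you reduce the lemma to the bare change-of-variables identity $\int_Z(g\circ\pi_Y)\,d\mu=\int_Y g\,d(\mu\circ\pi_Y^{-1})$, which needs only the definition of pushforward and monotone convergence; no disintegration machinery is required. The paper's route, by contrast, threads through the expectation operator $E$ and the existence of a disintegration, which is heavier technology than the statement actually demands. What the disintegration approach buys is consistency with the framework already set up in the paper (and it foreshadows the more substantive use of disintegration in Lemma~\ref{mn}); what your pushforward argument buys is a self-contained proof that would stand even without the preceding discussion of disintegration.
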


\begin{proof}
Put $\phi =\pi _{Y}$ in the preceding argument. \
Then, for the disintegration $t\mapsto \lambda_t$ of the measure $\mu$ with respect to $\phi$,
we know (cf. \cite[Proposition VII.2.10]{Con}) that
$
\hbox{supp}\,(\lambda_t)=\phi^{-1}(t)=X\times \{t\}\subseteq Z.
$
Thus, we may regard $\lambda _{t}$ as a measure on $X$ for each $t\in Y$
and write $d\lambda _{t}(s)$ for $d\lambda _{t}(s,t)$. \
Note that
%%for $f\in \mathcal{L}^1(\mu)$,
$$
E(f)(t)=\iint_{X\times Y} f\, d\lambda_t(s,t)=\iint_{X\times\{t\}} f\, d\lambda_t(s,t).
$$
We thus have
\begin{eqnarray*}
\left\Vert \frac{1}{t}\right\Vert _{L^{1}(\mu )}
&=&\iint \frac{1}{t}d\mu (s,t)\\
&=&\int_Y E\left(\frac{1}{t}\right) d\mu^Y(t)\quad\hbox{(by (\ref{disint}) with $\psi\equiv 1$)}                   \\
&=& \int_{Y}\left( \iint_{X\times \{t\}}\frac{1}{t}d\lambda_{t}(s,t)\right) d\mu ^{Y}(t) \\
&=&\int_{Y}\left( \int_{X}\frac{1}{t}d\lambda _{t}(s)\right) d\mu
^{Y}(t)=\int_{Y}\frac{1}{t}d\mu ^{Y}(t) \\
&=&\left\Vert \frac{1}{t}\right\Vert _{L^{1}\left( \mu ^{Y}\right) },
\end{eqnarray*}
which proves the lemma.
\end{proof}

The following is a well-known combinatoric identity, where the first
equality is called the Chu-Vandermonde identity.

\begin{lemma}
\label{chu}
\begin{equation*}
\sum_{k=0}^n \binom{n}{k}^2 = \binom{2n}{n} =\frac{1}{2\pi i} \int_{|z|=1}
\frac{(1+z)^{2n}}{z^{n+1}} \,dz=\frac{1}{\pi} \int_0^4 \frac{s^n}{\sqrt{%
4s-s^2}}ds.
\end{equation*}
\end{lemma}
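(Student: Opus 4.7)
The plan is to verify the three displayed equalities in turn, each by a standard generating-function or contour-integral argument.

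For the first equality $\sum_{k=0}^n\binom{n}{k}^2=\binom{2n}{n}$ (the classical Chu--Vandermonde identity), I would expand both sides of the polynomial identity $(1+x)^n(1+x)^n=(1+x)^{2n}$ and compare the coefficient of $x^n$. On the left, the Cauchy product gives $\sum_{k=0}^n\binom{n}{k}\binom{n}{n-k}$, and the symmetry $\binom{n}{n-k}=\binom{n}{k}$ reduces this to $\sum_{k=0}^n\binom{n}{k}^2$. On the right, the coefficient of $x^n$ in $(1+x)^{2n}$ is $\binom{2n}{n}$ by the binomial theorem.

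For the second equality, I would invoke Cauchy's integral formula (equivalently, the residue theorem) applied to the entire function $f(z)=(1+z)^{2n}=\sum_{k=0}^{2n}\binom{2n}{k}z^k$. Extracting the coefficient of $z^n$ gives
\begin{equation*}
\binom{2n}{n}=\frac{f^{(n)}(0)}{n!}=\frac{1}{2\pi i}\oint_{|z|=1}\frac{(1+z)^{2n}}{z^{n+1}}\,dz,
\end{equation*}
which is precisely the middle identity.

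For the third equality, I would parametrize the unit circle by $z=e^{i\theta}$, $\theta\in[0,2\pi]$, in order to reduce the contour integral to a real trigonometric integral. Using the factorization $1+e^{i\theta}=2e^{i\theta/2}\cos(\theta/2)$, a short calculation produces
\begin{equation*}
\frac{1}{2\pi i}\oint_{|z|=1}\frac{(1+z)^{2n}}{z^{n+1}}\,dz=\frac{1}{2\pi}\int_0^{2\pi}\bigl(2\cos(\theta/2)\bigr)^{2n}\,d\theta.
\end{equation*}
Since the integrand is invariant under $\theta\mapsto 2\pi-\theta$, this in turn equals $\tfrac{1}{\pi}\int_0^{\pi}\bigl(2\cos(\theta/2)\bigr)^{2n}\,d\theta$. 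I would then perform the substitution $s=2+2\cos\theta=4\cos^2(\theta/2)$, noting that $\sqrt{4s-s^2}=2\sin\theta$ for $\theta\in(0,\pi)$ and $ds=-2\sin\theta\,d\theta$; this yields $\bigl(2\cos(\theta/2)\bigr)^{2n}\,d\theta=-s^n\,ds/\sqrt{4s-s^2}$, and the reversed limits $\theta=0,\pi\leftrightarrow s=4,0$ absorb the minus sign to give $\tfrac{1}{\pi}\int_0^4 s^n/\sqrt{4s-s^2}\,ds$, as required.

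No step here poses any genuine obstacle; the only mild care required is in the final substitution, where one must correctly fold the interval $[0,2\pi]$ onto $[0,\pi]$ using the evenness of $\cos^{2n}(\theta/2)$ under $\theta\mapsto 2\pi-\theta$, and track the sign produced by reversing the limits of integration when passing from $\theta$ to $s$.
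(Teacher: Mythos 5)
Your proposal is correct and follows essentially the same route as the paper, which simply cites Gould's tables for the Chu--Vandermonde identity, invokes the Cauchy integral formula for the middle equality, and leaves the last equality as a ``direct calculation''; you have merely supplied the standard coefficient-extraction argument for the first step and carried out the substitution $s=2+2\cos\theta$ that the paper leaves implicit. All of your computations (in particular $\sqrt{4s-s^2}=2\sin\theta$ and the sign bookkeeping when reversing the limits) check out.
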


\begin{proof}
The first equality comes from \cite[(3.66)]{Gou},
the second equality follows from the Cauchy integral formula, and the last equality
follows from a direct calculation.
\end{proof}

\begin{lemma}\label{mn}
If $W_{(\alpha,\beta)}\equiv (T_{1},T_{2})$ is a 2-variable weighted shift given by
Figure \ref{Figure Lubin}\textrm{(ii)},
then $\left( T_{1},T_{2}\right) |_{\mathcal{M\cap N}}$ is subnormal with
Berger measure
\begin{equation}\label{claim_0}
\mu _{\mathcal{M\cap N}}\equiv \frac{1}{2}\delta _{\left(
\frac{1}{4},\frac{1}{4}\right) }+\frac{1}{2}\delta _{\left( \frac{1}{2},%
\frac{1}{2}\right)}.
\end{equation}
\end{lemma}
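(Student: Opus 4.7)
The plan is to verify the claimed Berger measure directly using the $2$-variable Berger theorem of Jewell--Lubin. First, I would unpack the weight structure of $(T_{1},T_{2})|_{\mathcal{M\cap N}}$. After reindexing so that the corner $(1,1)$ of $\mathcal{M}\cap\mathcal{N}$ is identified with the origin $(0,0)$, the definition tells us that both the $i$-th horizontal and $i$-th vertical slices of the restricted pair coincide with $W_{c}|_{\mathcal{L}_{i}}$, which is $\mathrm{shift}(c_{i},c_{i+1},\ldots)$. Reading off the weights of the restricted shift at position $(i,j)$, this forces the $\alpha$-weight (from the $j$-th horizontal slice, after moving $i$ steps) and the $\beta$-weight (from the $i$-th vertical slice, after moving $j$ steps) both to equal $c_{i+j}$. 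Commutativity condition (\ref{commuting}) is then trivial since $\alpha'_{(i,j)}=\beta'_{(i,j)}=c_{i+j}$.

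Second, I would compute the moments $\gamma'_{\mathbf{k}}$ of the restricted shift. Using the horizontal-then-vertical path from $(0,0)$ to $(k_{1},k_{2})$ we collect the weights $c_{0},c_{1},\ldots,c_{k_{1}-1}$ (horizontal leg) followed by $c_{k_{1}},c_{k_{1}+1},\ldots,c_{k_{1}+k_{2}-1}$ (vertical leg), so
\begin{equation*}
\gamma'_{(k_{1},k_{2})} \;=\; c_{0}^{2}c_{1}^{2}\cdots c_{k_{1}+k_{2}-1}^{2} \;=\; \gamma_{k_{1}+k_{2}}(W_{c}).
\end{equation*}
Thus the $2$-variable moment depends only on $k_{1}+k_{2}$, which is a structural hallmark of shifts whose Berger measure (if any) must be supported on the diagonal $\{s=t\}$.

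Third, I would invoke item (c) preceding Theorem \ref{Ans-Lubin}, which states that $W_{c}$ is subnormal with Berger measure $\xi_{c}=\tfrac{1}{2}\delta_{1/4}+\tfrac{1}{2}\delta_{1/2}$. Hence
\begin{equation*}
\gamma_{k_{1}+k_{2}}(W_{c}) \;=\; \int s^{k_{1}+k_{2}}\,d\xi_{c}(s) \;=\; \tfrac{1}{2}\bigl(\tfrac{1}{4}\bigr)^{k_{1}+k_{2}}+\tfrac{1}{2}\bigl(\tfrac{1}{2}\bigr)^{k_{1}+k_{2}}.
\end{equation*}
On the other hand, the proposed measure has moments
\begin{equation*}
\iint s^{k_{1}}t^{k_{2}}\,d\mu_{\mathcal{M}\cap\mathcal{N}}(s,t) \;=\; \tfrac{1}{2}\bigl(\tfrac{1}{4}\bigr)^{k_{1}}\bigl(\tfrac{1}{4}\bigr)^{k_{2}}+\tfrac{1}{2}\bigl(\tfrac{1}{2}\bigr)^{k_{1}}\bigl(\tfrac{1}{2}\bigr)^{k_{2}},
\end{equation*}
which agrees exactly with $\gamma'_{(k_{1},k_{2})}$ for every $\mathbf{k}\in\mathbb{Z}_{+}^{2}$. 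The $2$-variable Berger theorem then yields subnormality of $(T_{1},T_{2})|_{\mathcal{M\cap N}}$ with Berger measure $\mu_{\mathcal{M\cap N}}$ as in (\ref{claim_0}).

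I do not anticipate a serious obstacle: once the restricted shift is recognized as having equal $\alpha'$- and $\beta'$-weights of the form $c_{i+j}$, the moment calculation is a one-line identity and the match with $\mu_{\mathcal{M}\cap\mathcal{N}}$ is immediate. The only point requiring a bit of care is the correct bookkeeping of slice indices after passing to $\mathcal{M}\cap\mathcal{N}$, in particular confirming that the $i$-th slice of the \emph{restricted} shift is $W_{c}|_{\mathcal{L}_{i}}$ in both directions, so that every weight of the restricted shift is a single $c_{n}$ with $n=i+j$.
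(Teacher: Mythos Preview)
Your proposal is correct and follows essentially the same route as the paper: both arguments recognize that the moments of the restricted shift depend only on $k_{1}+k_{2}$ and equal $\gamma_{k_{1}+k_{2}}(W_{c})$, then match these against the moments of the proposed two-atom diagonal measure via the Jewell--Lubin (two-variable Berger) theorem. The paper packages the same computation through the diagonal-measure notation $d\delta_{t}(s)\,d\xi_{c}(t)$, but your direct bookkeeping $\alpha'_{(i,j)}=\beta'_{(i,j)}=c_{i+j}$ is the identical content, and arguably cleaner.
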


\begin{proof}
For each $t\in [0,1]$, define
$$
\delta_t(s):=
\begin{cases}
1\ \ &\hbox{if} \ s=t\\
0 &\hbox{otherwise}.
\end{cases}
$$
Then, by the weight diagram of $\left( T_{1},T_{2}\right) |_{\mathcal{M}\cap
\mathcal{N}}$ given in Figure \ref{Figure Lubin}\textrm{(ii), }we can see
that for all $k_{1},k_{2}\geq 0$,%
\begin{eqnarray}
\iint_{\left[ 0,1\right] ^{2}}s^{k_{1}}t^{k_{2}}d\mu _{\mathcal{M\cap N}%
}\left( s,t\right) &=&\int_{0}^{1}t^{k_{1}+k_{2}}d\left( \mu _{\mathcal{%
M\cap N}}\right) ^{Y}\left( t\right)\notag \\
&=&\int_{0}^{1}t^{k_{2}}\left[
  \int_{0}^{1}s^{k_{1}}d\delta _{t}\left( s\right) \right] d\left( \mu _{%
       \mathcal{M\cap N}}\right) ^{Y}\left( t\right)  \notag \\
&=&\iint_{\left[ 0,1\right] ^{2}}s^{k_{1}}t^{k_{2}}d\delta _{t}\left(
     s\right) d\xi_c(t) \notag \\
&=&\iint_{\left[ 0,1\right] ^{2}}s^{k_{1}}t^{k_{2}}d\delta _{t}\left(
     s\right) d \left( \frac{1}{2} \delta _{\frac{1}{4}} +\frac{1}{2%
        }\delta _{\frac{1}{2}} \right) (t)  \label{disintegral} \notag \\
&=&\iint_{\left[ 0,1\right] ^{2}}s^{k_{1}}t^{k_{2}}d\left( \frac{1}{2}%
      \delta _{\left( \frac{1}{4},\frac{1}{4}\right) } +\frac{1}{%
         2}\delta _{\left( \frac{1}{2},\frac{1}{2}\right) } \right) (s,t)
\text{,}  \notag
\end{eqnarray}%
which gives (\ref{claim_0}).
\end{proof}

\bigskip

We are now ready for:

\medskip

\noindent \textbf{Proof of Theorem \ref{Ans-Lubin}}.

\medskip

\noindent (i) \
For $m\ge 0$, let $W_{c}|_{\mathcal{L}_{m}}$ denote the restriction of
$W_{c}$ to $\mathcal{L}_{m}\equiv \bigvee \{e_{(k_1,0)}: k_1\geq m\}$. \
Since $\xi_c=\frac{1}{2}\delta_{\frac{1}{4}}+\frac{1}{2} \delta_{\frac{1}{2}}$, it follows that
for each $m=1,2,\cdots $, $W_{c}|_{\mathcal{L}_{m}}$ is also
subnormal with Berger measure
\begin{equation*}
d(\xi _{c})_{\mathcal{L}_{m}}(s) :=\frac{s^{m}}{\gamma
_{m}\left( W_{c}\right) }d\xi _{c}\left( s\right) =\frac{1}{\gamma
_{m}\left( W_{c}\right) }\left( \frac{1}{2}\left( \frac{1}{4}\right)
^{m}d\delta _{\frac{1}{4}}\left( s\right) +\frac{1}{2}\left( \frac{1}{2}%
\right) ^{m}d\delta _{\frac{1}{2}}\left( s\right) \right) \text{.}
\end{equation*}%
Note%
\begin{equation*}
\left\Vert \frac{1}{s}\right\Vert _{L^{1}((\xi _{c})_{\mathcal{L}_{m}})}=%
\frac{2\left( \frac{1}{4}\right) ^{m}+\left( \frac{1}{2}\right) ^{m}%
}{\gamma _{m}\left( W_{c}\right) }\quad \text{and}\quad \alpha
_{(0, m+1)}^{2}=\frac{\gamma _{m}\left( W_{c}\right) }{8\cdot \gamma
_{m}\left( W_{b}|_{\mathcal{L}_{1}}\right) }\, ,
\end{equation*}%
where the Berger measure of $W_{b}|_{\mathcal{L}_{1}}$ is $(\xi_{b})_{%
\mathcal{L}_{1}}:=\frac{1}{4}\delta _{\frac{1}{4}}+\frac{1}{8}\delta _{\frac{%
1}{2}}+\frac{5}{8}\delta _{1}$. \
Since the $m$-th horizontal slice of $\left( T_{1},T_{2}\right) |_{\mathcal{M\cap N}}$
is a restriction of $W_c$ to $\mathcal{L}_m$, it follows from
(\ref{sbe}) that $T_{1}$ is subnormal
if and only if $\alpha _{(0,m+1)}^{2}\leq \left\Vert \frac{1}{s}%
\right\Vert _{L^{1}((\xi _{c})_{\mathcal{L}_{m}})}^{-1}$ $\left( \text{%
all} \ m\geq 0\right) $. \ Since
$$
2\left( \frac{1}{4}\right)^{m} +\left(\frac{1}{2}\right)^{m}\leq
8\cdot \gamma _{m}\left( W_{b}|_{\mathcal{L}_1}\right) = 8\int s^m
d(\xi_b)_{\mathcal{L}_1} \hskip -0.1cm (s)= 2\left( \frac{1}{4}\right) ^{m}+\left(
\frac{1}{2}\right) ^{m}+5 \quad (\hbox{all} \ m\geq 0),
$$
it follows at once that $T_{1}$ is subnormal.

Similarly, if $n\geq 0$, then
\begin{equation}
\left\Vert \frac{1}{t}\right\Vert _{L^{1}((\xi _{c})_{\mathcal{L}_{n}})}=%
\frac{2\left( \frac{1}{4}\right) ^{n}+\left( \frac{1}{2}\right) ^{n}%
}{\gamma _{n}\left( W_{c}\right) }\quad \text{and}\quad \beta
_{(n+1,0)}^{2}=\frac{11x\cdot \gamma _{n}\left( W_{c}\right) }{%
8\cdot \gamma _{n}\left( W_{a}|_{\mathcal{L}_{1}}\right) } \, , \label{sde}
\end{equation}%
where the Berger measure of $W_{a}|_{\mathcal{L}_{1}}$ is $(\xi _{a})_{%
\mathcal{L}_{1}}:=\frac{1}{2}\delta _{\frac{1}{4}}+\frac{1}{4}\delta _{\frac{%
1}{2}}+\frac{1}{4}\delta _{1}$. \ Since $T_{2}$ is subnormal if and only if $%
\beta _{(n+1,0)}^{2}\leq \left\Vert \frac{1}{t}\right\Vert _{L^{1}((\xi
_{c})_{\mathcal{L}_{n}})}^{-1}$ $\left( \text{all }n\geq 0\right) $,
a direct calculation together with (\ref{sbe}) and (\ref{sde}) shows that
\begin{equation*}
T_{2}\ \hbox{is subnormal}\ \Longleftrightarrow \ x\leq \frac{8\left( \frac{1%
}{2}\left( \frac{1}{4}\right)^{n}+\frac{1}{4}\left( \frac{1}{2}\right)
^{n}+\frac{1}{4}\right) }{11\left( 2\left( \frac{1}{4}\right)
^{n}+\left( \frac{1}{2}\right) ^{n}\right) }\
\hbox{(all $n\ge
0$)}\ \Longleftrightarrow \ x\leq \frac{8}{33}\text{,}
\end{equation*}%
where the second implication follows from the observation that the fractional
function of the second term is increasing on $n\geq 0$. \
This proves (i).

\medskip

\noindent (ii) \ We first claim that
\begin{equation}
\left( T_{1},T_{2}\right) |_{\mathcal{M}}\
\hbox{is subnormal with Berger
measure}\ \mu _{\mathcal{M}}\equiv
\frac{1}{4}\delta _{\left( \frac{1}{4},\frac{1}{4}\right) }+\frac{1}{8}%
\delta _{\left( \frac{1}{2},\frac{1}{2}\right) }+\frac{5}{8}\delta _{\left( 0,1\right) }.  \label{claim_1}
\end{equation}%
For (\ref{claim_1}), we first observe that by Lemma \ref{Lemma1}, $\left\Vert
\frac{1}{s}\right\Vert _{L^{1}(\mu _{\mathcal{M\cap N}})}=\left\Vert \frac{1%
}{s}\right\Vert _{L^{1}\left( \left( \mu _{\mathcal{M\cap N}}\right)
^{X}\right) }=3$ since $\left( \mu _{\mathcal{M\cap N}}\right) ^{X}=\frac{1%
}{2}\delta _{\frac{1}{4}}+\frac{1}{2}\delta _{\frac{1}{2}}$ (by Lemma \ref{mn}). \
We thus have
\begin{equation}
(\mu _{\mathcal{M\cap N}})_{ext}^{Y}=\left( \left\Vert \frac{1}{s}%
\right\Vert _{L^{1}(\mu _{\mathcal{M\cap N}})}^{-1}  \hskip -0.3cm  \frac{\mu _{\mathcal{%
M\cap N}}}{s}\right) ^{Y}=\frac{2}{3}\delta
_{\frac{1}{4}}+\frac{1}{3}\delta _{\frac{1}{2}}\text{.}  \label{2}
\end{equation}%
Hence, by Lemma \ref{backext}(iii), $\left( T_{1},T_{2}\right) |_{\mathcal{M}%
}$ is subnormal if and only if
\begin{equation*}
\alpha _{(0,1)}^{2}\left\Vert \frac{1}{s}\right\Vert _{L^{1}(\mu _{%
\mathcal{M\cap N}})} \hskip -0.3cm    (\mu _{\mathcal{M\cap N}})_{ext}^{Y} \leq (\xi_{b})_{\mathcal{L}_{1}} \
\Longleftrightarrow \ \frac{1}{4}\delta _{\frac{1}{4}}+%
\frac{1}{8}\delta _{\frac{1}{2}}\leq \frac{1}{4}\delta _{\frac{1}{4}}+\frac{1%
}{8}\delta _{\frac{1}{2}}+\frac{5}{8}\delta _{1},
\end{equation*}%
which is always true. \ Therefore, $\left( T_{1},T_{2}\right) |_{\mathcal{M}%
} $ is always subnormal. \ By Lemma \ref{backext} and (\ref{2}), we get the
desired Berger measure of $\left( T_{1},T_{2}\right) |_{\mathcal{M}}$:
$$
\begin{aligned}
d \mu_{\mathcal M}(s,t)
&=\alpha_{(0,1)}^2 \left\Vert \frac{1}{s}\right\Vert_{L^1(\mu_{\mathcal{M\cap N}})} \hskip -0.3cm d(\mu_{\mathcal{M\cap N}})_{ext}(s,t)\\
&\qquad\qquad +\Biggl(d(\xi_b)_{\mathcal L_1}(t)-\alpha_{(0,1)}^2\left\Vert \frac{1}{s}\right\Vert_{L^1(\mu_{\mathcal{M\cap N}})}
   \hskip -0.3cm d(\mu_{\mathcal{M\cap N}})_{ext}^Y(t)\Biggr) d \delta_0(s)\\
&=\frac{3}{8} \left( \frac{2}{3} d \delta_{(\frac{1}{4},\frac{1}{4})} (s,t)
    + \frac{1}{3} d \delta_{(\frac{1}{2},\frac{1}{2})}(s,t)\right)
      + \frac{5}{8} d \delta_1(t) d \delta_0(s)\\
&= \frac{1}{4}  d \delta_{(\frac{1}{4},\frac{1}{4})} (s,t) +\frac{1}{8}d \delta_{(\frac{1}{2},\frac{1}{2})} (s,t)
   +\frac{5}{8} d \delta_{(0,1)} (s,t),
\end{aligned}
$$
which gives
\begin{equation*}
\mu _{\mathcal{M}} = \frac{1}{4}%
\delta _{\left( \frac{1}{4},\frac{1}{4}\right) }+\frac{1}{8}\delta _{\left(
\frac{1}{2},\frac{1}{2}\right) }
+\frac{5}{8}\delta _{\left( 0,1\right) }
\text{.}
\end{equation*}%
We next claim that
\begin{equation*}
\left( T_{1},T_{2}\right) |_{\mathcal{N}}\ \hbox{is subnormal}\
\Longleftrightarrow \ 0<x\leq \frac{2}{11}.  \label{claim2}
\end{equation*}%
By Lemma \ref{Lemma1}, we note that $\left\Vert \frac{1}{t}\right\Vert
_{L^{1}(\mu _{\mathcal{M\cap N}})}=\left\Vert \frac{1}{t}\right\Vert
_{L^{1}\left( \left( \mu _{\mathcal{M\cap N}}\right) ^{Y}\right) }=3$ and $%
(\mu _{\mathcal{M\cap N}})_{ext}^{X} (s)=\frac{2}{3}\delta _{\frac{1}{4}}+\frac{1%
}{3}\delta _{\frac{1}{2}}$. \ Thus, by Lemma \ref{backext}(iii), $\left(
T_{1},T_{2}\right) |_{\mathcal{N}}$ is subnormal if and only if
\begin{equation*}
\begin{tabular}{l}
$\beta _{\left( 1,0\right) }^{2}\left\Vert \frac{1}{t}\right\Vert
_{L^{1} \left( \mu _{\mathcal{M\cap N}}\right) }(\mu _{%
\mathcal{M\cap N}})_{ext}^{X}  \leq (\xi_{a})_{\mathcal{L}_{1}}$ \\
$\Longleftrightarrow
\frac{11x}{8}\cdot 3 \cdot \left(\frac{2}{3} \delta_{\frac{1}{4}}+\frac{1}{3}
\delta _{\frac{1}{2}} \right) \leq \frac{1}{2}\delta _{\frac{1}{4}}+\frac{1}{4}\delta
_{\frac{1}{2}}+\frac{1}{4}\delta _{1}\Longleftrightarrow x\leq \frac{2}{11}$.%
\end{tabular}%
\end{equation*}%
We now claim that
\begin{equation}
\left( T_{1},T_{2}\right) \ \hbox{is subnormal}\ \Longleftrightarrow \
0<x\leq \frac{2}{11}.  \label{claim_3}
\end{equation}%
Towards (\ref{claim_3}), observe that the commutativity of $T_{1}$ and $%
T_{2} $ comes directly from Figure 1(ii). \ By the proof of (i) just given
above, we know that $T_{1}$ is always subnormal and
$$
T_{2}\ \hbox{is subnormal} \ \Longleftrightarrow \
0<x\leq \frac{8}{33}.
$$
By Lemma \ref{Lemma1}, we have
$\left\Vert \frac{1}{t}\right\Vert _{L^{1}(\mu _{\mathcal{M}})}=\left\Vert
\frac{1}{t}\right\Vert _{L^{1}\left( \left( \mu _{\mathcal{M}}\right)
^{Y}\right) }=\frac{15}{8}$ (since $\mu _{\mathcal{M}}^{Y}=\frac{1}{4}%
\delta _{\frac{1}{4}}+\frac{1}{8}\delta _{\frac{1}{2}}+\frac{5}{8}\delta
_{1}$) and
$$
(\mu _{\mathcal{M}})_{ext}^{X}=\left( \left\Vert \frac{1}{t}%
\right\Vert _{L^{1}(\mu _{\mathcal{M}})}^{-1}\frac{\mu _{\mathcal{M}}}{t}%
\right) ^{X} =\frac{1}{3}\delta _{0}+
\frac{8}{15}\delta _{\frac{1}{4}}+\frac{2%
}{15}\delta _{\frac{1}{2}}.
$$
Hence, by Lemma \ref{backext}(iii), $\left(
T_{1},T_{2}\right) $ is subnormal if and only if
\begin{equation*}
\begin{tabular}{l}
$\beta _{00}^{2}\left\Vert \frac{1}{t}\right\Vert _{L^{1}\left( \mu _{%
\mathcal{M}}\right) }(\mu _{\mathcal{M}})_{ext}^{X}\leq \xi _{a}$
\\
$\Longleftrightarrow x\left( \frac{5}{8}\delta _{0}+\delta _{\frac{1}{4}}+%
\frac{1}{4}\delta _{\frac{1}{2}}\right) \leq \frac{3}{4}\delta _{0}+\frac{2}{%
11}\delta _{\frac{1}{4}}+\frac{1}{22}\delta _{\frac{1}{2}}+\frac{1}{44}%
\delta _{1}\Longleftrightarrow x\leq \frac{2}{11},$
\end{tabular}%
\end{equation*}
which proves (ii).

\

\medskip

\noindent (iii) \ For the subnormality of $T_{1}+T_{2}$, we shall use Agler's
criterion for subnormality in \cite{Agl}, which states that a contraction $%
S\in \mathcal{B(H)}$ is subnormal if and only if $\sum\limits_{\ell
=0}^{n}(-1)^{\ell }{\binom{n}{l}}\left\Vert S^{\ell }x\right\Vert ^{2}\geq 0$
for all $n\geq 1$ and all $x\in \mathcal{H}$. \ Since $(T_{1},T_{2})|_{%
\mathcal{M}}$ is subnormal, it is enough to consider Agler's criterion at $%
\left\{ e_{\left( k,0\right) }\right\} _{k=0}^{\infty }$: indeed, if $%
x=\sum_{k}a_{k}e_{(k,0)}$, then
\begin{equation*}
\left\Vert\left(\frac{T_{1}+T_{2}}{2}\right)^{\ell }x\right\Vert^{2}
=\sum_{k}|a_{k}|^{2}
\left\Vert\left(\frac{T_{1}+T_{2}}{2}\right)^{\ell }e_{(k,0)}\right\Vert^{2},
\end{equation*}%
and hence
\begin{equation*}
\sum_{\ell =0}^{n}(-1)^{\ell }{\binom{n}{l}} \left\Vert\left(\frac{T_{1}+T_{2}}{2}\right)^{\ell
}x\right\Vert^{2}=\sum_{k}|a_{k}|^{2}\sum_{\ell =0}^{n}(-1)^{\ell }{\binom{n}{l}}
\left\Vert\left(
\frac{T_{1}+T_{2}}{2}\right)^{\ell }e_{(k,0)}\right\Vert^{2},
\end{equation*}%
which gives
\begin{equation*}
\frac{T_{1}+T_{2}}{2}\ \text{is subnormal}\ \Longleftrightarrow \
P_{n}\left( k,0\right) :=\sum\limits_{\ell =0}^{n}(-1)^{\ell }{\binom{n}{%
\ell }}\left\Vert \left( \frac{T_{1}+T_{2}}{2}\right) ^{\ell }e_{\left(
k,0\right) }\right\Vert ^{2}\geq 0\quad \left( \text{all }n\geq 1\right)
\text{.}
\end{equation*}%
Hence, we see that $T_{1}+T_{2}$ is subnormal if and only if $\inf \Bigl\{
P_{n}\left( k,0\right) :\ n\in \mathbb{Z}_{+}\Bigr\} \geq 0$ for all $k\geq
0$. \ For $\ell \geq 1$, we observe
\begin{equation*}
\left( \frac{T_{1}+T_{2}}{2}\right) ^{\ell }=2^{-\ell }\left( T_{1}^{\ell
}+T_{2}^{\ell }+\sum\limits_{i=1}^{\ell -1}{\binom{\ell }{i}}T_{1}^{\ell
-i}T_{2}^{i}\right) \text{.}
\end{equation*}%
First of all, we suppose $k\geq 1$. \ We then have
\begin{equation*}
\begin{tabular}{l}
$P_{n}\left( k,0\right) =\sum\limits_{\ell =0}^{n}(-1)^{\ell }{\binom{n}{%
\ell }}\left\Vert \left( \frac{T_{1}+T_{2}}{2}\right) ^{\ell }e_{\left(
k,0\right) }\right\Vert ^{2}$ \\
\\
$=1+\sum\limits_{\ell =1}^{n}\left( -1\right) ^{\ell }{\binom{n}{\ell }}%
2^{-2\ell }\left( \frac{\gamma _{k+\ell }(\xi _{a})}{\gamma _{k}(\xi _{a})}+%
\frac{x}{8}\frac{\gamma _{k+\ell -2}\bigl((\mu _{\mathcal{M\cap N}})^{X}\bigr)}{\gamma
_{k}(\xi _{a})}+\sum\limits_{i=1}^{\ell -1}\binom{\ell }{i}^{2}\frac{x}{8}%
\frac{\gamma _{k+\ell -2}\bigl((\mu _{\mathcal{M\cap N}})^{X}\bigr)}{\gamma _{k}(\xi
_{a})}\right) $ \\
\\
$=1+\sum\limits_{\ell =1}^{n}\left( -1\right) ^{\ell }{\binom{n}{\ell }}%
2^{-2\ell }\left( \frac{\gamma _{k+\ell }(\xi _{a})}{\gamma _{k}(\xi _{a})}+%
\frac{x}{8}\frac{\gamma _{k+\ell -2}\bigl((\mu _{\mathcal{M\cap N}})^{X}\bigr)}{\gamma
_{k}(\xi _{a})}\left( \sum\limits_{i=1}^{\ell -1}\binom{\ell }{i}%
^{2}+1\right) \right) $,%
\end{tabular}
\label{equat7}
\end{equation*}%
where $\gamma _{\ell }(\xi _{a})$ and $\gamma _{\ell }\bigl((\mu _{\mathcal{M%
}\cap \mathcal{N}})^{X}\bigr)$ denote the $\ell $-th moments of $\hbox{shift}%
\,\left( \alpha _{\left( 0,0\right) },\alpha _{\left( 1,0\right) },\cdots
\right) $ and the $0$-th horizontal slice of $(T_{1},T_{2})|_{\mathcal{M}\cap \mathcal{N}}$, respectively.
\ Note that
\begin{equation*}
\begin{cases}
\gamma _{\ell }(\xi _{a})=\frac{2}{11}\left( \frac{1}{4}\right) ^{\ell }+%
\frac{1}{22}\left( \frac{1}{2}\right) ^{\ell }+\frac{1}{44} \\
\gamma _{\ell }\left( (\mu _{\mathcal{M\cap N}})^{X}\right) =\frac{1}{2}\left( \frac{1%
}{4}\right) ^{\ell }+\frac{1}{2}\left( \frac{1}{2}\right) ^{\ell }.%
\end{cases}
\label{gamma}
\end{equation*}%
We thus have
\begin{equation*}
\begin{tabular}{l}
$P_{n}(k,0)=1+\frac{1}{\gamma _{k}(\xi _{a})}\left( \sum\limits_{\ell
=1}^{n}\left( -1\right) ^{\ell }{\binom{n}{\ell }}2^{-2\ell }\left( \frac{2}{%
11}\left( \frac{1}{4}\right) ^{k+\ell }+\frac{1}{22}\left( \frac{1}{2}%
\right) ^{k+\ell }+\frac{1}{44}\right) \right. $ \\
$\ \ \ \ \ \ \ \ \ \ \ \ \ \ \ \ \ \ \ \ \ \ \ \ \ \ \ \ \ +\left. \frac{x}{8%
}\sum\limits_{\ell =1}^{n}\left( -1\right) ^{\ell }{\binom{n}{\ell }}%
2^{-2\ell }\left( \frac{1}{2}\left( \frac{1}{4}\right) ^{k+\ell -2}+\frac{1}{%
2}\left( \frac{1}{2}\right) ^{k+\ell -2}\right) \left(
\sum\limits_{i=1}^{\ell -1}\binom{\ell }{i}^{2}+1\right) \right) .$%
\end{tabular}
\label{2.8}
\end{equation*}%
Observe that
\begin{equation}
\sum_{i=1}^{\ell -1}{\binom{\ell }{i}}^{2}+1={\binom{2\ell }{\ell }}-1\quad %
\hbox{(by Lemma \ref{chu})}  \label{CV}
\end{equation}%
and
\begin{equation}
\sum\limits_{\ell =1}^{n}\left( -1\right) ^{\ell }{\binom{n}{\ell }}c^{\ell
}=\left( 1-c\right) ^{n}-1\ \ (0<c<1).  \label{binom}
\end{equation}%
By (\ref{CV}) and (\ref{binom}), $P_{n}\left( k,0\right) $ can be written as
\begin{equation}
\begin{tabular}{l}
$P_{n}\left( k,0\right) =1+\frac{1}{\gamma _{k}(\xi _{a})}\left( \left(
\frac{2}{11}-x\right) \left( \frac{1}{4}\right) ^{k}\left( \left( \frac{15}{%
16}\right) ^{n}-1\right) +\left( \frac{1}{22}-\frac{x}{4}\right) \left(
\frac{1}{2}\right) ^{k}\left( \left( \frac{7}{8}\right) ^{n}-1\right) +\frac{%
1}{44}\left( \left( \frac{3}{4}\right) ^{n}-1\right) \right. $ \\
$\ \ \ \ \ \ \ \ \ \ \ \ \ \left. +x\left( \frac{1}{4}\right)
^{k}\sum\limits_{\ell =1}^{n}(-1)^{\ell }{\binom{n}{\ell }}{\binom{2\ell }{%
\ell }}\left( \frac{1}{16}\right) ^{\ell }+\frac{x}{4}\left( \frac{1}{2}%
\right) ^{k}\sum\limits_{\ell =1}^{n}(-1)^{\ell }{\binom{n}{\ell }}{\binom{%
2\ell }{\ell }}\left( \frac{1}{8}\right) ^{\ell }\right) .$%
\end{tabular}
\label{**}
\end{equation}%
Now, we should resolve the last two terms of (\ref{**}). \ To do so, we
consider the following weighted shift%
\begin{equation*}
W_{S}:=\mathrm{shift}\left( \frac{\left\Vert Se_{\left( 0,0\right)
}\right\Vert }{\left\Vert e_{\left( 0,0\right) }\right\Vert },\frac{%
\left\Vert S^{2}e_{\left( 0,0\right) }\right\Vert }{\left\Vert Se_{\left(
0,0\right) }\right\Vert },\frac{\left\Vert S^{3}e_{\left( 0,0\right)
}\right\Vert }{\left\Vert S^{2}e_{\left( 0,0\right) }\right\Vert },\cdots
\right) \text{,}
\end{equation*}%
where $S:=U_{+}\otimes I+I\otimes U_{+}\in \mathcal{B}(\ell ^{2}(\mathbb{Z}%
_{+}^{2}))$ (where $U_{+}\equiv \mathrm{shift}(1,1,\cdots )$ is the unilateral shift), which is
subnormal. \ By Lambert's Theorem in \cite{Lam} and Berger's Theorem, we can
see that $W_{S}$ is subnormal and%
\begin{equation}
\int_{0}^{4}s^{\ell }d\mu \left( s\right) =\gamma _{\ell }\left(
W_{S}\right) =\left\Vert S^{\ell }e_{\left( 0,0\right) }\right\Vert ^{2}=
\sum_{k=0}^{\ell} \binom{\ell}{k}^2={%
\binom{2\ell }{\ell }}\,,  \label{equ2}
\end{equation}%
where $\mu $ is the Berger measure corresponding to the subnormal weighted
shift $W_{S}$. \ We thus have%
\begin{equation}
\begin{aligned} \sum\limits_{\ell =1}^{n}\left( -1\right) ^{\ell
}{\binom{n}{\ell }}{\binom{2\ell }{\ell }} \left(\frac{1}{16}\right)^\ell
&=\sum\limits_{\ell =1}^{n}\left( -1\right) ^{\ell }{\binom{n}{\ell }}\left(
\int_{0}^{4}s^{\ell }d\mu \left( s\right) \right)
\left(\frac{1}{16}\right)^\ell \quad \text{(by (\ref{equ2}))}\\
&=\int_{0}^{4}\left( \sum\limits_{\ell =0}^{n}{\binom{n}{\ell }}\left(
-1\right) ^{\ell }\left( \frac{s}{16}\right) ^{\ell }\right) d\mu \left(
s\right) -1\\ &=\int_{0}^{4}\left( 1-\frac{s}{16}\right) ^{n}d\mu \left(
s\right) -1  \quad \text{(by (\ref{binom}))} \end{aligned}  \label{equ6}
\end{equation}%
and similarly,
\begin{equation}
\sum\limits_{\ell =1}^{n}\left( -1\right) ^{\ell }{\binom{n}{\ell }}{\binom{%
2\ell }{\ell }}\left( \frac{1}{8}\right) ^{\ell }=\int_{0}^{4}\left( 1-\frac{%
s}{8}\right) ^{n}d\mu \left( s\right) -1\text{.}  \label{equ7}
\end{equation}%
By (\ref{equ6}) and (\ref{equ7}), (\ref{**}) can be written as
\begin{equation*}
\begin{tabular}{l}
$P_{n}\left( k,0\right) =1+\frac{1}{\gamma _{k}(\xi _{a})}\left( \left(
\frac{2}{11}-x\right) \left( \frac{1}{4}\right) ^{k}\left( \left( \frac{15}{%
16}\right) ^{n}-1\right) +\left( \frac{1}{22}-\frac{x}{4}\right) \left(
\frac{1}{2}\right) ^{k}\left( \left( \frac{7}{8}\right) ^{n}-1\right) +\frac{
1}{44}\left( \left( \frac{3}{4}\right) ^{n}-1\right) \right. $ \\
$\ \ \ \ \ \ \ \ \ \ \ \ \ \left. +\ x\left( \frac{1}{4}\right) ^{k}\left(
\int_{0}^{4}\left( 1-\frac{s}{16}\right) ^{n}d\mu \left( s\right) -1\right) +%
\frac{x}{4}\left( \frac{1}{2}\right) ^{k}\left( \int_{0}^{4}\left( 1-\frac{s%
}{8}\right) ^{n}d\mu \left( s\right) -1\right) \right) .$%
\end{tabular}
\label{equa7}
\end{equation*}%
Since $\gamma _{k}(\xi _{a})=\frac{2}{11}\left( \frac{1}{4}\right) ^{k}+%
\frac{1}{22}\left( \frac{1}{2}\right) ^{k}+\frac{1}{44}$, it follows that
\begin{equation}
\begin{tabular}{l}
$P_{n}\left( k,0\right) =\frac{1}{\gamma _{k}(\xi _{a})}\left( \left( \frac{2%
}{11}-x\right) \left( \frac{1}{4}\right) ^{k}\left( \frac{15}{16}\right)
^{n}+\left( \frac{1}{22}-\frac{x}{4}\right) \left( \frac{1}{2}\right)
^{k}\left( \frac{7}{8}\right) ^{n}+\frac{1}{44}\left( \frac{3}{4}\right)
^{n}\right. $ \\
$\ \ \ \ \ \ \ \ \ \ \ \ \ \left. +\ x\left( \frac{1}{4}\right)
^{k}\int_{0}^{4}\left( 1-\frac{s}{16}\right) ^{n}d\mu \left( s\right) +\frac{%
x}{4}\left( \frac{1}{2}\right) ^{k}\int_{0}^{4}\left( 1-\frac{s}{8}\right)
^{n}d\mu \left( s\right) \right) ,$%
\end{tabular}
\label{77}
\end{equation}%
which implies that
\begin{equation}
\begin{tabular}{l}
$P_{n}\left( k,0\right) \gamma _{k}(\xi _{a})=\frac{2}{11}\left( \frac{1}{4}%
\right) ^{k}\left( \frac{15}{16}\right) ^{n}+\frac{1}{22}\left( \frac{1}{2}%
\right) ^{k}\left( \frac{7}{8}\right) ^{n}+\frac{1}{44}\left( \frac{3}{4}%
\right) ^{n}$ \\
$\ \ \ \ \ \ \ \ \ \ \ \ \ +\ x\left( \frac{1}{4}\right) ^{k}\left(
\int_{0}^{4}\left( 1-\frac{s}{16}\right) ^{n}d\mu \left( s\right) -\left(
\frac{15}{16}\right) ^{n}\right) +\frac{x}{4}\left( \frac{1}{2}\right)
^{k}\left( \int_{0}^{4}\left( 1-\frac{s}{8}\right) ^{n}d\mu \left( s\right)
-\left( \frac{7}{8}\right) ^{n}\right) .$%
\end{tabular}
\label{equ777}
\end{equation}%
Observe that by Lemma \ref{chu},
\begin{equation*}
d\mu (s)=\frac{1}{\pi }\frac{ds}{\sqrt{4s-s^{2}}}.
\end{equation*}%
We thus have
\begin{equation*}
\begin{aligned} \frac{\int_0^4 \left(1-\frac{s}{16}\right)^n
d\mu(s)}{\left(\frac{15}{16}\right)^n} &= \frac{1}{\pi} \int_0^4
\left(\frac{16-s}{15}\right)^n\frac{ds}{\sqrt{4s-s^2}}\\ &\geq
\frac{1}{\pi}\int_{\frac{1}{3}}^{\frac{1}{2}}\left(\frac{16-s}{15}\right)^n%
\frac{ds}{\sqrt{4s-s^2}}\\
&=\frac{1}{6\pi}\left(\frac{16-s_0}{15}\right)^n\frac{1}{\sqrt{4s_0-s_0^2}}\
\ \ \hbox{(for some $s_0$ with $\frac{1}{3}<s_0<\frac{1}{2}$)} ,
\end{aligned}
\end{equation*}%
which tends to $\infty $ as $n\rightarrow \infty $ and similarly,
\begin{equation*}
\frac{\int_{0}^{4}\left( 1-\frac{s}{8}\right) ^{n}d\mu (s)}{\left( \frac{7}{8%
}\right) ^{n}}\rightarrow \infty \ \ \hbox{as}\ n\rightarrow \infty .
\end{equation*}%
This implies that by (\ref{equ777}), there exists $n_{0}\in \mathbb{Z}_{+}$
such that
\begin{equation}
P_{n}(k,0)\geq 0\ \ \hbox{if}\ n>n_{0}.  \label{777-1}
\end{equation}%
Now, suppose
\begin{equation*}
\begin{aligned} \varepsilon_1 &:= \min_{1\le n\le n_0} \int_{0}^{4}\left(
1-\frac{s}{16}\right) ^{n}d\mu \left( s\right) = \min_{1\le n\le n_0}
\frac{1}{\pi} \int_{0}^{4}\left( 1-\frac{s}{16}\right) ^{n}
\frac{ds}{\sqrt{4s-s^2}};\\ \varepsilon_2 &:= \min_{1\le n\le n_0}
\int_{0}^{4}\left( 1-\frac{s}{8}\right) ^{n}d\mu \left( s\right) =
\min_{1\le n\le n_0} \frac{1}{\pi} \int_{0}^{4}\left( 1-\frac{s}{8}\right)
^{n}\frac{ds}{\sqrt{4s-s^2}} \end{aligned}
\end{equation*}%
and put $\varepsilon :=\min \{\varepsilon _{1},\varepsilon _{2}\}$. \
Obviously, $\varepsilon >0$. Thus, by (\ref{77}),
\begin{equation*}
\begin{tabular}{l}
$P_{n}\left( k,0\right) \geq \frac{1}{\gamma _{k}(\xi _{a})}\left( \left(
\frac{2}{11}-x+\varepsilon \right) \left( \frac{1}{4}\right) ^{k}\left(
\frac{15}{16}\right) ^{n}+\left( \frac{1}{22}-\frac{x}{4}+\frac{\varepsilon
}{4}\right) \left( \frac{1}{2}\right) ^{k}\left( \frac{7}{8}\right) ^{n}+%
\frac{1}{44}\left( \frac{3}{4}\right) ^{n}\right) ,$%
\end{tabular}
\label{7771}
\end{equation*}%
which implies that
\begin{equation}
P_{n}(k,0)\geq 0\ \ \hbox{($1\le n\le n_0)$}\ \
\hbox{whenever $0< x
\le \frac{2}{11}+\varepsilon$}.  \label{777-2}
\end{equation}%
By (\ref{777-1}) and (\ref{777-2}), we can conclude that for each $k\geq 1$,
$P_{n}(k,0)\geq 0$ for all $n\in \mathbb{Z}_{+}$ if $0<x\leq \frac{2}{11}%
+\varepsilon $ (some $\varepsilon >0$).

If instead $k=0$ then the same argument shows that
\begin{equation*}
\begin{tabular}{l}
$P_{n}(0,0)=\left( \frac{3}{4}-\frac{5x}{8}\right) +\left( \frac{2}{11}%
-x\right) \left( \frac{15}{16}\right) ^{n}+\left( \frac{1}{22}-\frac{x}{4}%
\right) \left( \frac{7}{8}\right) ^{n}+\left( \frac{1}{44}+\frac{5x}{8}%
\right) \left( \frac{3}{4}\right) ^{n}$ \\
$\ \ \ \ \ \ \ \ \ \ \ \ \ +\ x\int_{0}^{4}\left( 1-\frac{s}{16}\right)
^{n}d\mu \left( s\right) +\frac{x}{4}\int_{0}^{4}\left( 1-\frac{s}{8}\right)
^{n}d\mu \left( s\right) ,$%
\end{tabular}
\label{777-3}
\end{equation*}%
which also implies that
\begin{equation*}
P_{n}(0,0)\geq 0\ \ \hbox{(all $n\in\mathbb{Z}_+$)}\ \
\hbox{whenever $0<
\epsilon \le \frac{2}{11}+\varepsilon$}.  \label{777-4}
\end{equation*}%
Therefore, we can conclude that $T_{1}+T_{2}$ is subnormal if $0<x\leq \frac{%
2}{11}+\varepsilon $ (some $\varepsilon >0$). \ This proves the theorem. \hfill
\qed

\medskip

\begin{remark}
Our $2$-variable weighted shift in Theorem \ref{Ans-Lubin} has $4$-atomic
Berger measures in {the $0$-th horizontal and vertical slices of }$\left( {%
T_{1}},{T_{2}}\right) $. \ However, if we take $3$-atomic Berger
measures in {the $0$-th horizontal and vertical slices of }$\left(
{T_{1}},{T_{2}}\right) $, then our extensively numerous trials
resisted resolution for finding a gap between the subnormality of
$T_{1}+T_{2}$ and the subnormality of $\left( {T_{1}},{T_{2}}\right) $.
\end{remark}

%%%%%%%%%%%%%%%%%%%%%%%%%%%%%%%%%%%%%%%%%%%%%%%%%%%%%%%%%%%%%%%%%%%%%%%%%%%%%%%%%
%%%%%%%%%%%%%%%%%%%%%%%%%%%%%%%%%%%%%%%%%%%%%%%%%%%%%%%%%%%%%%%%%%%%%%%%%%%%%%%%%%%


\begin{thebibliography}{99}
\bibitem{Ab} M.B. Abrahamse, \textit{Subnormal Toeplitz operators and
functions of bounded type}, Duke Math. J. \textbf{43}(1976), 597--604.

\bibitem{Ab2} M.B. Abrahamse, \textit{Commuting subnormal operators}, Ill.
J. of Math. \textbf{22(1)}(1978), 171--176.

\bibitem{AbD} M.B. Abrahamse and R.G. Douglas, \textit{A class of subnormal
operators related to multiply connected domains}, Adv. Math. \textbf{19}%
(1976), 106--148.

\bibitem{Agl} J. Agler, \textit{Hypercontractions and subnormality}, J.
Operator Theory \textbf{13(2)}(1985), 203--217.

\bibitem{Ath} A. Athavale, \textit{On joint hyponormality of operators},
Proc. Amer. Math. Soc. \textbf{103(2)}(1988), 417--423.

\bibitem{Br} J. Bram, \textit{Subnormal operators}, Duke Math. J. \textbf{22}%
(1955), 75--94.

\bibitem{Con} J.B. Conway, \textit{The Theory of Subnormal Operators},
Mathematical Surveys and Monographs, vol. 36, Amer. Math. Soc., Providence,
1991.

\bibitem{bridge} R.E. Curto, Joint hyponormality: A bridge between
hyponormality and subnormality, \textit{Proc. Symposia Pure Math.} 51(1990),
69--91.

\bibitem{QHWS} R.E. Curto, \textit{Quadratically hyponormal weighted shifts}%
, Integral Equations Operator Theory \textbf{13(1)}(1990), 49--66.

\bibitem{CLY1} R.E. Curto, S.H. Lee and J. Yoon, \textit{$k$-hyponormality
of multivariable weighted shifts}, J. Funct. Anal. \textbf{229(2)}(2005),
462--480.

\bibitem{CLY7} R.E. Curto, S.H. Lee and J. Yoon, \textit{Subnormality of
2-variable weighted shifts with diagonal core}, C. R. Acad. Sci. Paris
\textbf{351}(2013), 203--207.

\bibitem{CMX} R.E. Curto, P. Muhly and J. Xia, \textit{Hyponormal pairs of
commuting operators}, Contributions to operator theory and its applications
(Mesa, AZ, 1987), 1--22, Oper. Theory Adv. Appl. 35, Birkh\" auser, Basel,
1988.

\bibitem{CuYo1} R.E. Curto and J. Yoon, \textit{Jointly hyponormal pairs of
commuting subnormal operators need not be jointly subnormal}, Tran. Amer.
Math. Soc. \textbf{358(11)}(2006), 5139--5159.

\bibitem{De} J.A. Deddens, \textit{Intertwining analytic Toeplitz operators}%
, Mich. Math. J. \textbf{18}(1971), 243--246.

\bibitem{Fra} E. Franks, \textit{Polynomially subnormal operator tuples}, J.
Operator Theory \textbf{31(2)}(1994), 219--228.

\bibitem{GeWa} R. Gellar and L.J. Wallen, \textit{Subnormal weighted shifts
and the Halmos-Bram criterion}, Proc. Japan Acad. \textbf{46}(1970),
375--378.

\bibitem{Gou} H.W. Gould, \textit{Combinatorial identities}, Morgantown
Printing and Binding Co, Morgantown, 1972.

\bibitem{Hal2} P.R. Halmos, \textit{Normal dilations and extensions of
operators}, Summa Bras. Math. \textbf{2}(1950), 125--134.

\bibitem{Ito} T. Ito, \textit{On the commutative family of subnormal
operators}, J. Fac. Sci. Hokkaido Univ. Ser. I \textbf{14(3)}(1958), 1--15.

\bibitem{JeLu} N.P. Jewell and A.R. Lubin, \textit{Commuting weighted shifts
and analytic function theory in several variables}, J. Operator Theory
\textbf{1(2)}(1979), 207--223.

\bibitem{Lam} A. Lambert, \textit{Subnormality and weighted shifts}, J.
London Math. Soc. \textbf{14(3)}(1976), 476--480.

\bibitem{Lu3} A.R. Lubin, \textit{Weighted shifts and products of subnormal
operators}, Indiana Univ. Math. J. \textbf{26(5)}(1977), 839--845.

\bibitem{Lu1} A.R. Lubin, \textit{Extensions of commuting subnormal operators%
}, (Proc. Conf. Calif. State Univ., Long Beach, Calif., 1977), pp.115-120,
Lecture Notes in Math. 693, Springer, Berlin, 1978.

\bibitem{Lu2} A.R. Lubin, \textit{Weighted shifts and commuting normal
extension}, J. Austral Math. Soc. Ser. A \textbf{27(1)}(1979), 17--26.

\bibitem{Mla} W. Mlak, \textit{Commutants of subnormal operators}, Bull.
Acad. Pol. Sci. \textbf{19}(1971), 837--842.

\bibitem{Olin} R.F. Olin and J.E. Thomson, \textit{Lifting the commutant of
a subnormal operator}, Canad. J. Math. \textbf{31(1)}(1979), 148--156.

\bibitem{Slo} M. Slocinski, \textit{Normal extensions of commuting subnormal
operators}, Studia Math. \textbf{54}(1975), 259--266.

\bibitem{Sta} J. Stampfli, \textit{Which weighted shifts are subnormal?},
Pacific J. Math. \textbf{17}(1966), 367--379.

\bibitem{Szy} W. Szymanski, \textit{Dilations and subnormality}, Proc. Amer.
Math. Soc. \textbf{101(2)}(1987), 251--259.

\bibitem{Yos} T. Yoshino, \textit{Subnormal operator with a cyclic vector},
Tohoku Math. J. \textbf{21}(1969), 47--55.
\end{thebibliography}
\end{document}